\documentclass[11pt, a4paper]{amsart}
\usepackage[a4paper, margin=1in]{geometry}
\usepackage{amsmath}
\usepackage{amssymb}      
\usepackage{mathtools}    
\usepackage{bbm}          

\usepackage{microtype}
\usepackage{xcolor}
\definecolor{darkblue}{rgb}{0.0, 0.1, 0.4}
\usepackage[numbers, sort&compress]{natbib}
\usepackage[
colorlinks = true,
linkcolor  = black,
citecolor  = black,
urlcolor   = black,
pdfauthor  = {Aryaman Chandra},
pdftitle   = {Arithmetic Landscape Functions for the Discrete Cat Map},
pdfsubject = {Dynamical Systems},
]{hyperref}
\usepackage{graphicx}    
\usepackage{booktabs}     
\usepackage{multirow}    

\theoremstyle{plain}
\newtheorem{theorem}{Theorem}[section]

\newtheorem{proposition}[theorem]{Proposition}
\newtheorem{corollary}[theorem]{Corollary}

\theoremstyle{remark}
\newtheorem{remark}[theorem]{Remark}
\newtheorem*{remark*}{Remark}

\newcommand{\Z}{\mathbb{Z}}
\newcommand{\R}{\mathbb{R}}
\newcommand{\C}{\mathbb{C}}

\newcommand{\XN}{X_N}                           
\newcommand{\piN}{\pi(N)}                       
\newcommand{\tilu}{\widetilde{u}_N}             
\newcommand{\bone}{\mathbbm{1}}                 

\DeclareMathOperator{\Fix}{Fix}
\DeclareMathOperator{\tr}{tr}
\DeclareMathOperator{\diag}{diag}

\title{Arithmetic Landscape Functions for the Discrete Cat Map}

\author{Aryaman Chandra}
\address{Heritage Xperiential Learning School,
	CRPF Road, Sector 62, Gurugram, Haryana 122005, India}
\email{aryaman.chandra@gmail.com}

\date{}

\keywords{discrete cat map, Pisano period, diagonal Green function,
	landscape function, dynamical zeta function, arithmetic localization,
	M-matrix, transfer operator}

\begin{document}
	\begin{abstract}
		We study the diagonal Green function $\tilu(x) = [L_N^{-1}]_{x,x}$ of
		the operator $L_N = I - \alpha P$ on the finite torus
		$\XN = (\Z/N\Z)^2$, where $P$ is the transfer operator of the
		discrete cat map $T_N(x) = Ax \bmod N$. We prove the exact formula
		$\tilu(x) = (1-\alpha^{k_x})^{-1}$, where $k_x$ is the minimal
		period of $x$ under $T_N$. This formula appears to be new. It shows
		that the diagonal landscape is a complete spectral invariant of the
		orbit structure, depending on each point only through its orbit
		length.
		Since $\det(A-I) = -1$ is a unit in $\Z/N\Z$ for every $N \geq 2$,
		the origin is the unique fixed point of $T_N$ and the unique global
		maximum of $\tilu$. The resulting localization is driven by
		arithmetic alone, with no disorder and no broken symmetry, a
		mechanism distinct from classical Anderson theory and from
		Filoche--Mayboroda landscape theory. We further establish the
		Chandra Green--Zeta Identity, showing that the Green
		trace satisfies
		$\tr(G_N) = N^2 - \alpha\,\tfrac{d}{d\alpha}\log Z_N(\alpha)$, where
		$Z_N$ is the dynamical zeta function of $T_N$, and that a Laplacian
		perturbation degrades the localization gap at first order in
		$\varepsilon$. All results are verified computationally.
	\end{abstract}
	\maketitle
	
	\section{Introduction}
	\label{sec:intro}
	Long before probing the global geometry of a spectrum, a more immediate spatial question presents itself: where do individual eigenfunctions choose to live?
	In~\cite{ChandraJain2025}, we show that the torsion function
	$u(x) = \tfrac{1}{2}x(1-x)$, the solution of $-u'' = 1$ on $(0,1)$
	with Dirichlet boundary conditions, carries a spectral expansion
	\[
	u(x) = \sum_{\substack{n=1 \\ n\,\mathrm{odd}}}^{\infty}
	\frac{2\sqrt{2}}{(n\pi)^3}\,\psi_n(x),
	\]
	where $\psi_n(x) = \sqrt{2}\sin(n\pi x)$ are the normalized
	eigenfunctions of the Dirichlet Laplacian. Only the odd modes appear.
	Their selection is enforced by symmetry. The constant forcing $\bone$
	is invariant under the midpoint reflection $x \mapsto 1-x$, while
	every even eigenfunction reverses sign, so
	$\langle \bone, \psi_{2k} \rangle = 0$ for all $k \geq 1$.
	
	The present paper takes this symmetry principle to a different domain,
	replacing the interval with the finite torus $\XN = (\Z/N\Z)^2$ and
	the Dirichlet Laplacian with the transfer operator of the discrete
	cat map, the reduction modulo $N$ of the linear Anosov map of Arnold
	and Avez~\cite{ArnoldAvez1968}. The matrix
	\[
	A = \begin{pmatrix}1 & 1 \\ 1 & 2\end{pmatrix}
	\]
	acts on $\XN$ by $T_N(x) = Ax \bmod N$, and the operator of interest
	is $L_N = I - \alpha P$, where $P$ is the transfer operator of $T_N$,
	given by $(Pf)(x) = f(T_N^{-1}(x)) = f(A^{-1}x \bmod N)$, and
	$\alpha \in (0,1)$. The analogue of the torsion function is
	$u_N = L_N^{-1}\bone$.
	
	Two facts immediately change the character of the problem. First,
	$T_N$ preserves the counting measure on $\XN$, so $P\bone = \bone$,
	and with it $u_N = (1-\alpha)^{-1}\bone$. The landscape is the same
	constant at every site. Second, $\det(A-I) = -1$ is a unit in
	$\Z/N\Z$ for every $N \geq 2$, which forces $x = 0$ to be the unique
	fixed point of $T_N$ (Theorem~\ref{thm:fixedpt}).
	
	The flatness of $u_N$ is the toral analogue of parity selection.
	On the interval the even modes vanish because the midpoint reflection
	is a symmetry of the constant forcing. On $\XN$ the full permutation
	symmetry of the measure-preserving map forces $u_N$ to be constant,
	a stronger selection in which every non-constant mode is annihilated.
	The spatial content absent from $u_N$ passes entirely to the diagonal
	of the Green operator $G_N = L_N^{-1}$.
	
	Our main technical bridge connects the operator trace directly to the underlying dynamics via the \textbf{Chandra Green--Zeta Identity} (Theorem~\ref{thm:main4}), establishing an exact analytical identity between $\tr(G_N)$ and the logarithmic derivative of the dynamical zeta function $Z_N(\alpha)$. 
	
	Building upon this framework, our main result is the exact formula (Theorem~\ref{thm:main2})
	\[
	\tilu(x) \;=\; G_N(x,x) \;=\; \frac{1}{1-\alpha^{k_x}},
	\]
	where $k_x$ is the minimal period of $x$ under $T_N$. The significance of this relation is that the diagonal landscape acts as a spectral invariant of the orbit structure: $\tilu(x)$ depends on $x$ solely through its orbit length $k_x$, placing every point on a given periodic orbit on the exact same level set. 
	
	As a physical consequence, we establish that the maximum value $(1-\alpha)^{-1}$ is attained uniquely at the origin, and the gap between the origin and every other site diverges as $\alpha \to 1^-$ (Theorem~\ref{thm:nodisorder}). Remarkably, this spatial localization requires no disorder and no broken symmetry, emerging purely from arithmetic geometry.
	
	\begin{remark}
		All results concern the specific matrix
		$A = \bigl(\begin{smallmatrix}1&1\\1&2\end{smallmatrix}\bigr)$.
		The conclusions that depend on $\det(A-I) = -1$ being a unit hold for
		every $N \geq 2$ simultaneously, a property with no analogue for a
		generic matrix in $\mathrm{SL}(2,\Z)$, where $\det(B-I)$ may share a factor
		with $N$.
	\end{remark}
	
	Section~\ref{sec:related_work} places the work in its broader context.
	Section~\ref{sec:foundations} establishes the arithmetic and
	operator-theoretic foundations. Section~\ref{sec:results} contains the
	main results. Section~\ref{sec:numerical} presents numerical
	verification. Section~\ref{sec:extensions} discusses extensions and
	future directions.
	
	\section{Related Work}
	\label{sec:related_work}
	
	Two problems have run in parallel through spectral theory for the better part of a century. The first asks how much geometric information a spectrum encodes. Kac's formulation~\cite{Kac} made this precise, and the eventual discovery of isospectral non-congruent domains showed that the answer is less than one might hope. The second asks how a spectrum is distributed in space, which eigenfunctions concentrate and where. These problems are logically independent, yet the landscape theory of Filoche and Mayboroda~\cite{FilocheMayboroda2012, FilocheMayboroda2013} showed that a single Poisson solve connects them. The solution $u$ of $Lu = 1$ acts as a confining potential, and eigenfunctions are large precisely where $1/u$ is small.
	
	Both problems have, until recently, been studied in the presence of disorder. Anderson localization~\cite{Anderson1958} arises from a random potential that breaks the periodicity of the medium, and the landscape theory was developed in that same disordered setting. The present paper shows that spatial concentration of the diagonal landscape can arise without any disorder. The operator $L_N = I - \alpha P$ is translation-equivariant on the finite torus. Its diagonal Green function concentrates at a single site for every $N$, and the mechanism is purely arithmetic.
	
	The companion paper~\cite{ChandraJain2025} established the one-dimensional instance of this programme. There the torsion function $u(x) = \tfrac{1}{2}x(1-x)$ encodes the full spectrum of the Dirichlet Laplacian. It expands as a uniformly convergent superposition of odd eigenfunctions, with coefficients decaying as $\mathcal{O}(n^{-3})$, and the reciprocal $1/u$ locates the amplitude maxima of each mode. The parity selection in that setting is the prototype for the constant-forcing selection studied here.
	
	The arithmetic of the cat map modulo $N$, its orbit structure, periodic point counts, and the connection between $\pi(N)$ and the Fibonacci sequence, were developed by Percival and Vivaldi~\cite{PercivalVivaldi1987} and by Dyson and Falk~\cite{DysonFalk1992}. Their results are the foundation for the orbit-counting arguments in Section~\ref{sec:foundations}. The trace formula connecting $\operatorname{tr}(G_N)$ to the dynamical zeta function $Z_N(\alpha)$ belongs to the tradition of Gutzwiller~\cite{Gutzwiller1971} and Ruelle~\cite{Ruelle1994}, reduced here to its equal-weight limit on a finite set.
	
	\section{Mathematical Foundations}
	\label{sec:foundations}
	
	\subsection{The discrete cat map and its arithmetic}
	
	Fix $N \geq 2$ and let $\XN = (\Z/N\Z)^2$. The matrix
	\[
	A = \begin{pmatrix}1 & 1 \\ 1 & 2\end{pmatrix},
	\]
	introduced by Arnold and Avez~\cite{ArnoldAvez1968} as the paradigm of
	an Anosov diffeomorphism of the two-torus, has determinant one, so the
	map $T_N(x) = Ax \bmod N$ is a measure-preserving bijection on $\XN$.
	We equip $\XN$ with the graph structure in which $x$ and $y$ are
	adjacent whenever $x - y \equiv \pm e_1$ or $\pm e_2 \pmod{N}$; the
	resulting graph is $4$-regular and connected. The \emph{Pisano period}
	$\piN$ is the order of $A$ in $SL(2,\Z/N\Z)$.
	
	Since $A$ is the square of the Fibonacci companion matrix
	$B = \bigl(\begin{smallmatrix}0&1\\1&1\end{smallmatrix}\bigr)$,
	its powers are governed by the Fibonacci sequence.
	
	\begin{theorem}\label{thm:fib}
		For all $n \geq 0$,
		\[
		A^n = \begin{pmatrix}F_{2n-1} & F_{2n} \\ F_{2n} & F_{2n+1}\end{pmatrix},
		\]
		where $F_k$ is the $k$-th Fibonacci number with $F_{-1} = 1$,
		$F_0 = 0$, $F_1 = 1$.
	\end{theorem}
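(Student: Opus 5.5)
The approach is induction on $n$, using the Fibonacci companion matrix $B = \bigl(\begin{smallmatrix}0&1\\1&1\end{smallmatrix}\bigr)$ as an intermediate. First I establish the classical identity
\[
B^m = \begin{pmatrix} F_{m-1} & F_m \\ F_m & F_{m+1}\end{pmatrix}, \qquad m \geq 0,
\]
by induction on $m$. The base case $m=0$ is the identity matrix, since $F_{-1}=1$, $F_0=0$ and $F_1=1$; note that the convention $F_{-1}=1$ is exactly what makes $F_{-1}+F_0=F_1$ hold, so the recurrence is valid from index $-1$ onward. For the inductive step I compute $B^{m+1} = B^m B$. Multiplying on the right by $B$ sends columns $(c_1,c_2)$ to $(c_2, c_1+c_2)$, so the new entries are $F_m$, $F_{m-1}+F_m = F_{m+1}$, $F_{m+1}$ and $F_m+F_{m+1}=F_{m+2}$. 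This is exactly the claimed form with $m$ replaced by $m+1$.

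Next, since $B^2 = \bigl(\begin{smallmatrix}1&1\\1&2\end{smallmatrix}\bigr) = A$, as noted just before the statement (and checkable by direct multiplication), we have $A^n = B^{2n}$. Substituting $m = 2n$ into the identity above gives the stated formula at once.

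Alternatively, one can run a direct induction on $n$ using $A^{n+1} = A^n A$. The entries become $F_{2n-1}+F_{2n} = F_{2n+1}$, then $F_{2n-1}+2F_{2n} = F_{2n+1}+F_{2n} = F_{2n+2}$, and similarly $F_{2n}+F_{2n+1} = F_{2n+2}$ and $F_{2n}+2F_{2n+1} = F_{2n+3}$. No step is a genuine obstacle. The only point requiring care is the base case $n=0$, which depends on the convention $F_{-1}=1$ so that $A^0 = I$. I would state explicitly that the Fibonacci recurrence is assumed to hold from index $-1$.
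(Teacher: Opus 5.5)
Your proof is correct, and your ``alternative'' paragraph is essentially the paper's proof: a direct induction on $n$ via $A^{n+1} = A\cdot A^n$, applying the Fibonacci recurrence entry by entry.

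Your main route is different. You first prove the classical companion-matrix identity $B^m = \bigl(\begin{smallmatrix}F_{m-1}&F_m\\F_m&F_{m+1}\end{smallmatrix}\bigr)$ and then set $m = 2n$ using $A = B^2$. This gives a stronger statement, covering all powers of $B$ including odd exponents. It also has a cleaner inductive step, with at most one addition per entry and no coefficients of $2$ to reduce. It is, moreover, exactly the identity that ties the order of $B$ modulo $N$ to the Fibonacci period, which the paper relies on immediately afterwards when it asserts $\pi(N) = \pi_F(N)/2$. The paper's direct route is shorter and needs no auxiliary lemma.

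Two details favour your write-up. First, your base case $n=0$, resting on the convention $F_{-1}=1$, is what the statement (``for all $n \geq 0$'') actually requires. The paper starts its induction at $n=1$ and leaves $n=0$ implicit. Second, in the direct computation you record the entries $F_{2n-1}+2F_{2n}$ and $F_{2n}+2F_{2n+1}$ before reducing them. The paper's display instead writes the second row of $A\cdot A^n$ directly in the already-reduced forms $F_{2n}+F_{2n+1}$ and $F_{2n+1}+F_{2n+2}$.
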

	
	\begin{proof}
		The case $n = 1$ is immediate: $F_1 = F_2 = 1$ and $F_3 = 2$ give
		$A^1 = \bigl(\begin{smallmatrix}1&1\\1&2\end{smallmatrix}\bigr)$.
		For the inductive step, suppose the identity holds for $n$. Multiplying
		$A \cdot A^n$ entry by entry gives
		\[
		A^{n+1} = \begin{pmatrix}
			F_{2n-1}+F_{2n} & F_{2n}+F_{2n+1} \\
			F_{2n}+F_{2n+1} & F_{2n+1}+F_{2n+2}
		\end{pmatrix}
		= \begin{pmatrix}
			F_{2n+1} & F_{2n+2} \\
			F_{2n+2} & F_{2n+3}
		\end{pmatrix},
		\]
		using $F_{k+2} = F_{k+1}+F_k$ at each step.
	\end{proof}
	
	The relation $A = B^2$ gives $\piN = \pi_F(N)/2$ for all $N \geq 3$,
	where $\pi_F(N)$ is the classical period of the Fibonacci sequence
	modulo $N$. Since $\pi_F(N)$ is always even for $N \geq 3$~\cite{Wall1960},
	this halving is exact. The bound $\piN \leq 3N$ and the detailed
	arithmetic of the period are due to Dyson and Falk~\cite{DysonFalk1992}.
	
	The period $\piN$ is assembled from its values at prime powers by the
	following two results.
	
	\begin{proposition}\label{prop:mult}
		For $N = \prod p_i^{a_i}$, one has
		$\piN = \mathrm{lcm}\bigl(\pi(p_1^{a_1}), \ldots, \pi(p_r^{a_r})\bigr)$.
	\end{proposition}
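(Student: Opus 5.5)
The plan is to use the Chinese Remainder Theorem to split $\Z/N\Z$ into its prime-power factors. Write $N = \prod_{i=1}^r p_i^{a_i}$. The ring isomorphism
\[
\Z/N\Z \;\cong\; \prod_{i=1}^r \Z/p_i^{a_i}\Z,
\qquad m \bmod N \mapsto \bigl(m \bmod p_1^{a_1}, \ldots, m \bmod p_r^{a_r}\bigr),
\]
is applied entrywise to $2\times 2$ matrices. It gives a group isomorphism
\[
SL(2,\Z/N\Z) \;\cong\; \prod_{i} SL(2,\Z/p_i^{a_i}\Z).
\]
The determinant commutes with reduction, so the determinant-one condition is preserved in both directions. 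Under this isomorphism, $A \bmod N$ is sent to the tuple $\bigl(A \bmod p_1^{a_1}, \ldots, A \bmod p_r^{a_r}\bigr)$.

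The key step is that a matrix congruence modulo $N$ holds exactly when it holds modulo each $p_i^{a_i}$. For an integer $n \geq 1$,
\[
A^n \equiv I \pmod N \iff A^n \equiv I \pmod{p_i^{a_i}} \text{ for every } i.
\]
This is entrywise CRT applied to the four entries of $A^n - I$: an integer is divisible by $N$ if and only if it is divisible by every $p_i^{a_i}$, since these are pairwise coprime. Theorem~\ref{thm:fib} makes this concrete, since it reduces the condition to the simultaneous congruences $F_{2n} \equiv 0$ and $F_{2n-1} \equiv F_{2n+1} \equiv 1$. The argument does not need that form, however.

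From here the statement is the standard fact that an element of a direct product has order equal to the lcm of the orders of its components. By definition of order, $A^n \equiv I \pmod{p_i^{a_i}}$ if and only if $\pi(p_i^{a_i}) \mid n$. Hence $A^n \equiv I \pmod N$ if and only if $n$ is a common multiple of all the $\pi(p_i^{a_i})$. The least positive such $n$ is $\mathrm{lcm}\bigl(\pi(p_1^{a_1}),\ldots,\pi(p_r^{a_r})\bigr)$, and this least $n$ is $\piN$ by definition. All orders involved are finite because $SL(2,\Z/M\Z)$ is a finite group for every $M$.

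I expect no real obstacle. The only point needing care is that the order of an element divides every $n$ with $A^n = I$. This follows from the division algorithm: write $n = q\,\pi + s$ with $0 \le s < \pi$; then $A^s = I$, which forces $s = 0$ by minimality of $\pi$.
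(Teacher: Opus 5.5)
Your proof is correct and follows the same route as the paper: entrywise CRT reduces $A^n \equiv I \pmod N$ to the simultaneous prime-power congruences, and the least common solution is the lcm. You also make explicit the step the paper leaves implicit, namely that $A^n \equiv I \pmod{p_i^{a_i}}$ holds exactly when $\pi(p_i^{a_i}) \mid n$, which is what justifies identifying the least common solution with the lcm.
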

	
	\begin{proof}
		By the Chinese Remainder Theorem, $A^k \equiv I \pmod{N}$ if and only
		if $A^k \equiv I \pmod{p_i^{a_i}}$ for each $i$. The smallest $k$
		satisfying all conditions simultaneously is
		$\mathrm{lcm}(\pi(p_i^{a_i}))$.
	\end{proof}
	
	\begin{proposition}\label{prop:primepower}
		For each prime $p$ and integer $a \geq 1$, one has
		$\pi(p^a) = p^s\pi(p)$ for some integer $0 \leq s \leq a-1$.
	\end{proposition}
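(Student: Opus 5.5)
The plan is to prove the statement by induction on $a$: show that the ratio $\pi(p^{a})/\pi(p^{a-1})$ is always either $1$ or $p$. Telescoping then gives $\pi(p^a) = p^s\pi(p)$ with $s$ equal to the number of steps where the ratio is $p$. There are $a-1$ such steps, so $0 \le s \le a-1$.

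\emph{First step: divisibility $\pi(p^{a-1}) \mid \pi(p^a)$.} If $A^k \equiv I \pmod{p^a}$, then in particular $A^k \equiv I \pmod{p^{a-1}}$. So the set of $k$ with $A^k\equiv I \pmod{p^a}$ is a subgroup of $\Z$ contained in the one for $p^{a-1}$. Hence the order modulo $p^{a-1}$ divides the order modulo $p^a$.

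\emph{Second step: the key lifting estimate.} Write $m=\pi(p^{a-1})$, so that $A^m = I + p^{a-1}C$ for some integer matrix $C$, with $a\ge 2$. Expand $(I+p^{a-1}C)^p$ by the binomial theorem; this is legitimate because $I$ commutes with $C$. The result is
\[
(I+p^{a-1}C)^p = I + p\cdot p^{a-1}C + \binom{p}{2}p^{2(a-1)}C^2 + \cdots + p^{p(a-1)}C^p.
\]
We need every term beyond $I$ to vanish modulo $p^a$.
\begin{itemize}
\item The linear term $p^aC$ vanishes.
\item For $2\le j\le p-1$, the term $\binom{p}{j}p^{j(a-1)}C^j$ is divisible by $p^{1+j(a-1)}$. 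Since $1+j(a-1)\ge a$, it vanishes.
\item The last term $p^{p(a-1)}C^p$ has exponent $p(a-1)\ge a$ whenever $p\ge 2$ and $a\ge 2$. The one borderline case is $p=2$, $a=2$, where the exponent is $2 = a$, which is still fine.
\end{itemize}
Therefore $A^{pm}\equiv I \pmod{p^a}$, and so $\pi(p^a)\mid p\,\pi(p^{a-1})$.

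\emph{Conclusion.} The first two steps give $m \mid \pi(p^a) \mid pm$. Since $p$ is prime, $\pi(p^a)/m\in\{1,p\}$, and the induction closes.

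The main obstacle I expect is just the careful bookkeeping of the $p$-adic valuations in the binomial expansion. This is especially true of the top term for small $p$: the $p=2$ case sits exactly on the boundary, since $2(a-1)\ge a$ holds precisely because $a\ge2$. There is no need to invoke Theorem~\ref{thm:fib} or Proposition~\ref{prop:mult}. The argument uses only that $A$ is an integer matrix invertible modulo $p$, and invertibility holds because $\det A=1$.
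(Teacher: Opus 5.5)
Your proof is correct and is essentially the paper's argument. Both sandwich the period between a divisibility from reduction mod a lower power and a bound from binomially lifting $I+p^{k}C$ to its $p$-th power. The only difference is that you lift from $A^{\pi(p^{a-1})}$ at each level, which gives the slightly sharper fact that each ratio $\pi(p^a)/\pi(p^{a-1})$ is $1$ or $p$; the paper iterates from $A^{\pi(p)}$ to get $\pi(p)\mid\pi(p^a)\mid p^{a-1}\pi(p)$ directly.
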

	
	\begin{proof}
		If $A^k \equiv I \pmod{p^a}$ then $A^k \equiv I \pmod{p}$, so
		$\pi(p) \mid \pi(p^a)$. For the upper bound, write
		$A^{\pi(p)} = I + pB$ for some integer matrix $B$. Since $I$
		commutes with every matrix, the binomial theorem gives
		\[
		A^{p\,\pi(p)} = (I+pB)^p
		= \sum_{k=0}^{p}\binom{p}{k}(pB)^k
		= I + p\cdot pB + \sum_{k=2}^{p}\binom{p}{k}p^kB^k.
		\]
		Every term with $k \geq 2$ contains $p^k$ with $k \geq 2$, so
		$A^{p\,\pi(p)} \equiv I \pmod{p^2}$. Iterating this argument gives
		$A^{p^j\pi(p)} \equiv I \pmod{p^{j+1}}$ for each $j \geq 0$, hence
		$\pi(p^a) \mid p^{a-1}\pi(p)$. The ratio $\pi(p^a)/\pi(p)$ is a
		positive integer dividing $p^{a-1}$, hence a power of $p$.
	\end{proof}
	
	As a concrete illustration, $\pi(5) = 10$ constrains $\pi(25)$ to
	$\{10, 50\}$; computation gives $\pi(25) = 50$, and then
	$\pi(50) = \mathrm{lcm}(\pi(2),\pi(25)) = \mathrm{lcm}(3,50) = 150$,
	consistent with Table~\ref{tab:pisano}.
	
	\subsection{The unique fixed point}
	
	A fixed point of $T_N$ satisfies $(A-I)x \equiv 0 \pmod{N}$. The
	matrix
	\[
	A - I = \begin{pmatrix}0 & 1 \\ 1 & 1\end{pmatrix}
	\]
	has determinant $-1$, which is a unit in $\Z/N\Z$ for every $N \geq 2$.
	
	\begin{theorem}\label{thm:fixedpt}
		The map $T_N$ has exactly one fixed point for every $N \geq 2$,
		namely $x = 0$.
	\end{theorem}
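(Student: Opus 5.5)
The plan is to reduce the fixed-point equation to a linear system over $\Z/N\Z$ and use the invertibility of $A-I$ modulo $N$. A point $x\in\XN$ is fixed by $T_N$ exactly when $Ax\equiv x \pmod N$, i.e.\ $(A-I)x\equiv 0\pmod N$. Existence is immediate: $x=0$ satisfies this since $A0=0$. So the content of the statement is uniqueness.

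For uniqueness I will exhibit an explicit inverse of $A-I$ with integer entries. The adjugate formula for a $2\times 2$ matrix gives
\[
(A-I)^{-1} = \frac{1}{\det(A-I)}\begin{pmatrix}1 & -1\\ -1 & 0\end{pmatrix} = \begin{pmatrix}-1 & 1\\ 1 & 0\end{pmatrix},
\]
using $\det(A-I)=-1$. One checks directly that $\bigl(\begin{smallmatrix}0&1\\1&1\end{smallmatrix}\bigr)\bigl(\begin{smallmatrix}-1&1\\1&0\end{smallmatrix}\bigr)=I$ over $\Z$. Since this identity has integer entries, it reduces modulo $N$ for every $N\ge 2$. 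Hence $A-I$ is invertible as an endomorphism of $\XN$, with inverse $M=\bigl(\begin{smallmatrix}-1&1\\1&0\end{smallmatrix}\bigr)\bmod N$.

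Multiplying $(A-I)x\equiv 0$ on the left by $M$ then yields $x\equiv 0 \pmod N$, which proves uniqueness. As an alternative that avoids matrices entirely, one can write out the system: the first row gives $x_2\equiv 0$, and substituting into the second row $x_1+x_2\equiv 0$ gives $x_1\equiv 0$. I might present this version as a remark because it is transparent.

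There is no real obstacle here. The only point needing care is to stress that invertibility comes from $\det(A-I)$ being a unit, here $\pm1$, in $\Z/N\Z$, rather than from working over a field. This is what makes the argument uniform in $N$, including composite $N$ where $\Z/N\Z$ has zero divisors. This is exactly the feature highlighted in the preceding remark, in contrast with a generic $B\in\mathrm{SL}(2,\Z)$.
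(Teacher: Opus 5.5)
Your proof is correct and takes the same route as the paper. Both reduce the fixed-point equation to $(A-I)x \equiv 0 \pmod N$ and conclude $x=0$ because $A-I$ is invertible over $\Z/N\Z$, with $\det(A-I)=-1$ a unit; your explicit integer inverse $\bigl(\begin{smallmatrix}-1&1\\1&0\end{smallmatrix}\bigr)$ and the row-by-row elimination just make concrete what the paper invokes through the unit-determinant criterion.
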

	
	\begin{proof}
		The fixed-point equation $(A-I)x \equiv 0 \pmod{N}$ has a unique
		solution if and only if $A-I$ is invertible over $\Z/N\Z$, which
		holds if and only if $\det(A-I)$ is a unit in $\Z/N\Z$. We have
		$\det(A-I) = -1$, which is a unit in $\Z/N\Z$ for every $N \geq 2$.
		Hence $x = 0$ is the unique solution.
	\end{proof}
	
	That $-1$ is a unit modulo every integer simultaneously is the
	arithmetic fact on which this paper depends. For a generic matrix
	$B \in SL(2,\Z)$, the determinant $\det(B-I)$ may share a factor with
	$N$, and uniqueness of the fixed point fails.
	
	\subsection{Periodic point counts}
	
	The number of $k$-periodic points of $T_N$ is read off from the Smith
	normal form of $A^k - I$, following Percival and
	Vivaldi~\cite{PercivalVivaldi1987}.
	
	\begin{theorem}\label{thm:smith}
		Let $d_1^{(k)} \mid d_2^{(k)}$ be the invariant factors of $A^k - I$
		over $\Z$. Then
		\[
		|\Fix_N(k)| = \gcd\!\bigl(d_1^{(k)}, N\bigr) \cdot \gcd\!\bigl(d_2^{(k)}, N\bigr).
		\]
	\end{theorem}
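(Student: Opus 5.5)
The plan is to reduce the count to the kernel of a diagonal matrix acting on $(\Z/N\Z)^2$. The set $\Fix_N(k)$ consists of the $x \in \XN$ with $A^k x \equiv x \pmod N$, that is, the kernel of the group endomorphism of $(\Z/N\Z)^2$ induced by the integer matrix $M = A^k - I$. The kernel will be computed after putting $M$ into Smith normal form over $\Z$.

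First I note that $M$ is nonsingular over $\Z$ for $k \geq 1$. The eigenvalues of $A$ are $\varphi^{\pm 2} \neq 1$, so $\det M = 2 - \tr(A^k) = 2 - (F_{2k-1}+F_{2k+1}) \neq 0$ by Theorem~\ref{thm:fib}. Hence both invariant factors $d_1^{(k)}, d_2^{(k)}$ are nonzero. (Even if one were zero, the convention $\gcd(0,N)=N$ would make the argument go through unchanged.) Write $M = UDV$ with $U,V \in GL(2,\Z)$ and $D = \diag(d_1^{(k)}, d_2^{(k)})$. Since $\det U, \det V = \pm 1$ are units modulo $N$, the reductions $\bar U, \bar V$ are automorphisms of $(\Z/N\Z)^2$. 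Therefore $\ker \bar M = \bar V^{-1}(\ker \bar D)$, and in particular $|\ker \bar M| = |\ker \bar D|$.

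Next, $\ker \bar D$ splits coordinatewise as a product of the kernels of multiplication by $d_i^{(k)}$ on $\Z/N\Z$. For a single integer $d$, the solutions of $dy \equiv 0 \pmod N$ are exactly the multiples of $N/\gcd(d,N)$, so there are $\gcd(d,N)$ of them. Multiplying the two coordinate counts gives $|\Fix_N(k)| = \gcd(d_1^{(k)},N)\gcd(d_2^{(k)},N)$.

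The only delicate step is checking that unimodular changes of basis over $\Z$ descend to bijections modulo $N$ and so preserve kernel sizes. This holds because reduction mod $N$ is a ring homomorphism, so $\bar U \bar U^{-1} = I$. The remaining steps are routine. The divisibility $d_1 \mid d_2$ is not actually needed for the count.
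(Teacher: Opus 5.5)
Your proposal is correct and takes the same route as the paper: put $A^k - I$ into Smith normal form over $\Z$, observe that the unimodular factors reduce to bijections of $(\Z/N\Z)^2$, and count the $\gcd(d_i,N)$ solutions of each diagonal congruence $d_i y \equiv 0 \pmod N$. Your side remarks are accurate but not needed for the count: $A^k - I$ is nonsingular for $k \geq 1$, the convention $\gcd(0,N)=N$ handles the case $k=0$, and the divisibility $d_1 \mid d_2$ is never used.
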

	
	\begin{proof}
		Let $U(A^k-I)V = \diag(d_1,d_2)$ be the Smith normal form over $\Z$.
		The substitution $x = V\tilde{x}$ is bijective over $\Z/N\Z$ since
		$\det V = \pm 1$, and transforms the system $(A^k-I)x \equiv 0
		\pmod{N}$ into two independent congruences
		$d_i\tilde{x}_i \equiv 0 \pmod{N}$, $i = 1,2$. The number of solutions
		to $d_i\tilde{x}_i \equiv 0 \pmod{N}$ in $\Z/N\Z$ is exactly
		$\gcd(d_i,N)$, so the total count is the product.
	\end{proof}
	
	At a prime modulus, the rank of $A^k - I$ over $\mathbb{F}_p$ takes
	only three values, and the periodic-point count simplifies
	accordingly.
	
	\begin{theorem}\label{thm:prime_count}
		For prime $p$,
		\[
		|\Fix_p(k)| = \begin{cases}
			p^2 & \text{if } \pi(p) \mid k, \\
			p   & \text{if } \pi(p) \nmid k \text{ and } p \mid \tr(A^k)-2, \\
			1   & \text{otherwise.}
		\end{cases}
		\]
	\end{theorem}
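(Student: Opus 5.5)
The plan is to identify $\Fix_p(k)$ with the kernel of the matrix $M_k = A^k - I$ over the field $\mathbb{F}_p$. Then $|\Fix_p(k)| = p^{\,2-\operatorname{rank}_{\mathbb{F}_p} M_k}$, and the three cases correspond to the three possible ranks $0$, $1$, $2$. This is the prime-modulus specialization of Theorem~\ref{thm:smith}. Over a field, the congruence $(A^k-I)x \equiv 0 \pmod p$ is simply a linear system, so its solution set is a subspace of dimension $2 - \operatorname{rank}$. I will argue directly with linear algebra over $\mathbb{F}_p$ rather than through invariant factors.

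\emph{Rank $0$.} $M_k = 0$ over $\mathbb{F}_p$ exactly when $A^k \equiv I \pmod p$. By definition of $\pi(p)$ as the order of $A$ in $SL(2,\Z/p\Z)$, this holds if and only if $\pi(p) \mid k$. In that case every point is fixed and the count is $p^2$.

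\emph{Rank $\le 1$ versus rank $2$.} A $2\times 2$ matrix over a field has rank at most $1$ if and only if its determinant vanishes, so I compute $\det(A^k - I)$. Since $\det A^k = 1$, the Cayley--Hamilton identity $\det(X - I) = \det X - \tr X + 1$ for $2\times2$ matrices $X$ gives
\[
\det(A^k - I) = 2 - \tr(A^k).
\]
Hence, when $\pi(p)\nmid k$, the rank is exactly $1$ if $p \mid \tr(A^k) - 2$, giving $p$ fixed points, and exactly $2$ otherwise, giving the single fixed point $0$. Theorem~\ref{thm:fib} could be cited to write $\tr(A^k) = F_{2k-1}+F_{2k+1}$, but this is not needed for the statement.

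The only step requiring care is the case split. I must make sure that ``$M_k \neq 0$ and $\det M_k = 0$'' really forces rank exactly $1$, which is immediate for $2\times 2$ matrices. I must also check that the order of conditions in the statement matches the ranks. When $\pi(p)\mid k$, the trace condition $p \mid \tr(A^k)-2$ also holds automatically, since $\tr I = 2$. The statement handles this overlap by putting the case $\pi(p)\mid k$ first and excluding it in the second case, so no contradiction arises. Beyond this bookkeeping I expect no genuine obstacle; the whole argument is rank counting over $\mathbb{F}_p$ plus the determinant--trace identity.
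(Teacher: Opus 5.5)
Your proposal is correct and follows the paper's proof essentially step for step. You read off the rank of $A^k-I$ over $\mathbb{F}_p$: it is $0$ exactly when $\pi(p)\mid k$, and otherwise it is $1$ or $2$ according to whether $\det(A^k-I)=2-\tr(A^k)$ vanishes in $\mathbb{F}_p$. Your check that the trace condition holds automatically when $\pi(p)\mid k$, so the case ordering is consistent, is a bookkeeping point the paper leaves implicit.
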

	
	\begin{proof}
		The matrix $A^k - I$ over $\mathbb{F}_p$ has rank determined by two
		conditions. Its rank is zero if and only if $A^k = I$ in
		$SL(2,\mathbb{F}_p)$, that is, $\pi(p) \mid k$; in this case the
		kernel is all of $\mathbb{F}_p^2$ and $|\Fix_p(k)| = p^2$. When
		$\pi(p) \nmid k$, we have $A^k \neq I$ but the rank may still be one
		if $\det(A^k - I) = 2-\tr(A^k) = 0$ in $\mathbb{F}_p$; then the
		kernel has dimension one and $|\Fix_p(k)| = p$. Otherwise the matrix
		has rank two, the kernel is trivial, and $|\Fix_p(k)| = 1$.
	\end{proof}
	
	\begin{proposition}\label{prop:density}
		For prime $p$, the fraction of points with maximal period satisfies
		\[
		\frac{|\{x \in X_p : k_x = \pi(p)\}|}{p^2}
		\;\geq\; 1 - \frac{\tau(\pi(p))}{p}
		\;\longrightarrow\; 1 \quad \text{as } p \to \infty,
		\]
		where $\tau$ is the number-of-divisors function.
	\end{proposition}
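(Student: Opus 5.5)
Every point of $X_p$ is periodic, and its minimal period $k_x$ divides $\pi(p)$, because $A^{\pi(p)} = I$ over $\mathbb{F}_p$. So I will bound the complement: a point with $k_x \neq \pi(p)$ must have $k_x = d$ for some proper divisor $d$ of $\pi(p)$, and hence lies in $\Fix_p(d)$. This gives the covering
\[
\{x \in X_p : k_x < \pi(p)\} \;\subseteq\; \bigcup_{\substack{d \mid \pi(p) \\ d < \pi(p)}} \Fix_p(d),
\]
and a union bound reduces everything to bounding $|\Fix_p(d)|$ for proper divisors $d$.

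By Theorem~\ref{thm:prime_count}, if $\pi(p) \nmid d$, then $|\Fix_p(d)| \in \{1, p\}$, so in all cases $|\Fix_p(d)| \leq p$. A proper divisor $d$ of $\pi(p)$ is never a multiple of $\pi(p)$, so this applies to every term. The number of proper divisors is $\tau(\pi(p)) - 1 \leq \tau(\pi(p))$. Hence the number of points of non-maximal period is at most $\tau(\pi(p))\, p$. Dividing by $p^2$ gives
\[
\frac{|\{x : k_x = \pi(p)\}|}{p^2} \;\geq\; 1 - \frac{\tau(\pi(p))}{p}.
\]

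For the limit, I use two facts.
\begin{itemize}
\item $\pi(p) \leq 3p$ by the Dyson--Falk bound quoted earlier. A direct argument also works: $A^k = I$ happens within the group generated by $A$, which lives in $\mathbb{F}_p[A]$, a ring of size $p^2$, so the order is at most $p^2$ (and in fact at most $p+1$).
\item The divisor bound $\tau(n) = O(n^{\epsilon})$ for every $\epsilon > 0$, or even the elementary $\tau(n) \leq 2\sqrt{n}$.
\end{itemize}
With $\tau(n) \leq 2\sqrt{n}$ and $\pi(p) \leq 3p$, I get $\tau(\pi(p))/p \leq 2\sqrt{3p}/p \to 0$. This settles the limit without any deep input.

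The only step needing care is the claim that $d < \pi(p)$ cannot fall in the $p^2$ case of Theorem~\ref{thm:prime_count}. This holds because that case requires $\pi(p) \mid d$, which is impossible for $0 < d < \pi(p)$. I also need that a point of exact period $d$ lies in $\Fix_p(d)$, which is immediate from the definition. I expect no real obstacle beyond citing a polynomial bound on $\pi(p)$ correctly.
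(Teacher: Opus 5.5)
Your proof is correct and is essentially the paper's argument: a union bound over the proper divisors $d$ of $\pi(p)$, using $|\Fix_p(d)|\le p$ from Theorem~\ref{thm:prime_count}; your pairing of $\pi(p)\le 3p$ with $\tau(n)\le 2\sqrt{n}$ also usefully makes explicit the polynomial bound on $\pi(p)$ that the paper's appeal to $\tau(n)=O(n^{\delta})$ leaves implicit. One unused aside is false: $\pi(p)\le p+1$ fails at the ramified prime $p=5$, where $\pi(5)=10$ (it holds for every $p\ne 5$), and the cruder bound $\pi(p)<p^2$ would close the limit only together with $\tau(n)=O(n^{\delta})$ for some $\delta<1/2$, not with $\tau(n)\le 2\sqrt{n}$.
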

	
	\begin{proof}
		A point $x \in X_p$ has period $k_x < \pi(p)$ if and only if $k_x$
		is a proper divisor of $\pi(p)$. For each proper divisor
		$d < \pi(p)$, Theorem~\ref{thm:prime_count} gives $|\Fix_p(d)| \leq p$.
		The number of non-maximal-period points is bounded by the sum over
		all proper divisors, of which there are at most $\tau(\pi(p))-1$:
		\[
		|\{x \in X_p : k_x < \pi(p)\}| \;\leq\; (\tau(\pi(p))-1)\cdot p
		\;\leq\; \tau(\pi(p))\cdot p.
		\]
		Dividing by $p^2$ gives the stated lower bound. The limit follows from
		the pointwise estimate $\tau(n) = O(n^\delta)$ for any
		$\delta > 0$~\cite[Thm.~317]{Hardy1979}.
	\end{proof}
	
	\subsection{The operator $L_N$}
	\label{sec:operator}
	
	The operator of interest is $L_N = I - \alpha P - \varepsilon\Delta$,
	where $\varepsilon \geq 0$ is a perturbation parameter. The
	$\varepsilon = 0$ case is the focus of the main results; the
	$\varepsilon > 0$ case is studied perturbatively in
	§\ref{sec:perturbation}.
	
	The \emph{transfer operator} $(Pf)(x) = f(A^{-1}x \bmod N)$ pulls
	back functions along the cat map. Because $T_N$ preserves counting
	measure, $P$ is a unitary permutation matrix on $\ell^2(\XN)$:
	$P^*P = PP^* = I$ and $\|P\|_{\mathrm{op}} = 1$. Its order is the
	Pisano period, $P^{\piN} = I$, so its eigenvalues are $\piN$-th roots
	of unity. It fixes the constant function: $P\bone = \bone$.
	
	The \emph{discrete Laplacian}
	$(\Delta f)(x) = \sum_{y \sim x}(f(y) - f(x))$ is self-adjoint and
	satisfies $\langle \Delta f, f \rangle \leq 0$ for all $f$, with
	equality if and only if $f$ is constant. Since every vertex has degree
	four, all Laplacian eigenvalues lie in $[-8,0]$ and
	$\|\Delta\|_{\mathrm{op}} \leq 8$.
	
	Since $A(-x) = -Ax$, the negation involution $Sx = -x \bmod N$
	commutes with $L_N$. Any eigenfunction of $L_N$ that is antisymmetric
	under $S$ is orthogonal to $\bone$ and contributes nothing to
	$u_N = L_N^{-1}\bone$. This is the toral instance of the parity
	selection established in~\cite{ChandraJain2025}, where the midpoint
	reflection played the same role.
	
	\section{Main Results}
	\label{sec:results}
	
	\subsection{Invertibility and the landscape}
	
	The M-matrix structure of $L_N$ gives invertibility and nonnegativity
	of the inverse simultaneously.
	
	\begin{theorem}\label{thm:main1}
		For $0 \leq \alpha < 1$ and $\varepsilon \geq 0$, the operator $L_N$
		is invertible on $\ell^2(\XN)$ and its inverse $G_N = L_N^{-1}$
		satisfies $G_N(x,y) \geq 0$ for all $x,y \in \XN$.
	\end{theorem}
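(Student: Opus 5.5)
The plan is to write $L_N = I - \alpha P - \varepsilon\Delta$ in the form $sI - B$ with $B$ entrywise nonnegative and $s > \rho(B)$, the standard nonsingular M-matrix decomposition. Then invertibility and $G_N \ge 0$ both follow from the convergent Neumann series. Since $(\Delta f)(x) = \sum_{y\sim x} f(y) - 4f(x)$, we have $-\varepsilon\Delta = 4\varepsilon I - \varepsilon W$, where $W$ is the adjacency matrix of the $4$-regular torus graph (entries in $\{0,1,2\}$, since for $N=2$ the neighbours $x\pm e_i$ coincide). Hence
\[
L_N = (1+4\varepsilon)I - \bigl(\alpha P + \varepsilon W\bigr) =: sI - B,
\]
with $s = 1+4\varepsilon$ and $B = \alpha P + \varepsilon W$ entrywise nonnegative, because $P$ is a permutation matrix and $\alpha,\varepsilon\ge 0$.

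Next I would bound $\rho(B)$ by the maximum row sum ($\ell^\infty$ operator norm). Every row of $P$ sums to $1$, and every row of $W$ sums to $4$, the degree. So $\|B\|_\infty = \alpha + 4\varepsilon < 1 + 4\varepsilon = s$, using $\alpha<1$. Therefore $\|B/s\|_\infty < 1$, and the Neumann series
\[
G_N = L_N^{-1} = \frac{1}{s}\sum_{j=0}^{\infty}\Bigl(\frac{B}{s}\Bigr)^{j}
\]
converges absolutely. This series is a two-sided inverse of $L_N$ on the finite-dimensional space $\ell^2(\XN)$, so invertibility on $\ell^2$ follows immediately; all norms are equivalent there. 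Each term $(B/s)^j$ is a product of entrywise nonnegative matrices, hence nonnegative. A convergent sum of nonnegative matrices is nonnegative, which gives $G_N(x,y)\ge 0$ for all $x,y$.

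As a consistency check, when $\varepsilon=0$ the series reduces to $\sum_j \alpha^j P^j$. This expansion should also be the starting point for the later diagonal formula, since $[P^j]_{x,x} = 1$ exactly when $k_x \mid j$. There is no real obstacle here. The only point needing care is the sign convention of $\Delta$: the diagonal term $-4f(x)$ must be absorbed into the scalar $s$ rather than into $B$, so that $B$ stays nonnegative and the strict gap $s - \|B\|_\infty = 1-\alpha > 0$ is uniform in $\varepsilon$. An alternative route to invertibility alone is $\mathrm{Re}\langle L_N f,f\rangle \ge (1-\alpha)\|f\|^2 + \varepsilon\langle -\Delta f,f\rangle \ge (1-\alpha)\|f\|^2$, using $\|P\|_{\mathrm{op}}=1$ and $-\Delta\ge 0$. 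Nonnegativity of the inverse, however, still requires the Neumann series argument.
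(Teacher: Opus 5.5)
Your proof is correct and is essentially the paper's argument. The paper gets invertibility from the estimate $\mathrm{Re}\langle L_N f,f\rangle \ge (1-\alpha)\|f\|^2$ (your stated alternative), and nonnegativity by noting that $L_N$ is a Z-matrix with row diagonal-dominance gap $1-\alpha$ and then citing inverse-positivity of nonsingular M-matrices; your splitting $L_N = (1+4\varepsilon)I - (\alpha P + \varepsilon W)$ with $\|\alpha P + \varepsilon W\|_\infty = \alpha + 4\varepsilon$ and the resulting nonnegative Neumann series is the self-contained proof of that cited fact, and it yields invertibility and $G_N \ge 0$ in one step (placing all of $\alpha P$ in the nonnegative part also handles the diagonal entry $P_{0,0}=1$ and the doubled adjacency entries at $N=2$, which the paper's entrywise bookkeeping treats only implicitly).
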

	
	\begin{proof}
		Suppose $L_Nf = 0$. Taking the real part of $\langle L_Nf,f\rangle = 0$
		and using $|\langle Pf,f\rangle| \leq \|f\|^2$ (unitarity of $P$) and
		$\langle \Delta f,f\rangle \leq 0$ (nonpositivity of $\Delta$) gives
		\[
		0 = \|f\|^2 - \alpha\,\mathrm{Re}\langle Pf,f\rangle
		- \varepsilon\langle\Delta f,f\rangle
		\;\geq\; (1-\alpha)\|f\|^2.
		\]
		Since $1-\alpha > 0$, this forces $f = 0$, establishing invertibility.
		
		For nonnegativity of $G_N$, observe that the off-diagonal entries of
		$L_N$ are
		\[
		[L_N]_{x,y} =
		-\alpha\,\mathbf{1}_{[A^{-1}x = y \bmod N]}
		- \varepsilon\,\mathbf{1}_{[y \sim x]}
		\;\leq\; 0 \quad (x \neq y),
		\]
		so $L_N$ is a Z-matrix. For each row $x$, the diagonal dominance gap is
		\[
		[L_N]_{x,x} - \sum_{y \neq x}|[L_N]_{x,y}|
		= 1 - \alpha - \varepsilon\,\deg(x) + \varepsilon\,\deg(x)
		= 1-\alpha > 0,
		\]
		where the cancellation uses the bijectivity of $T_N$ (exactly one $y$
		satisfies $A^{-1}x = y$) and the $4$-regularity of the graph (the
		Laplacian row sum is zero). Hence $L_N$ is a nonsingular M-matrix,
		and the inverse of a nonsingular M-matrix has nonneg\-ative
		entries~\cite[Thm.~6.2.3]{BermanPlemmons}.
	\end{proof}
	
	With invertibility established, $u_N = L_N^{-1}\bone$ is well-defined.
	
	\begin{proposition}\label{prop:constant}
		For all $\varepsilon \geq 0$ and $\alpha \in [0,1)$, the landscape is
		spatially constant: $u_N = (1-\alpha)^{-1}\bone$.
	\end{proposition}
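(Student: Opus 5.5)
The plan is to show directly that the constant vector solves the equation, and then invoke uniqueness. By Theorem~\ref{thm:main1}, $L_N$ is invertible for $\alpha \in [0,1)$ and $\varepsilon \geq 0$. So $u_N = L_N^{-1}\bone$ is the unique solution of $L_N u = \bone$. It therefore suffices to exhibit one solution, and the candidate is $c\,\bone$ with $c = (1-\alpha)^{-1}$.

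The key steps are three short computations.

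\emph{First,} $P\bone = \bone$. This holds because $(P\bone)(x) = \bone(A^{-1}x \bmod N) = 1$, as noted in \S\ref{sec:operator}. It is the measure-preservation (bijectivity) of $T_N$, and it uses nothing about $\det A$ beyond $A^{-1}$ existing mod $N$.

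\emph{Second,} $\Delta\bone = 0$. Indeed $(\Delta \bone)(x) = \sum_{y\sim x}(1-1) = 0$, and this holds regardless of the regularity of the graph.

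\emph{Third,} combining the two,
\[
L_N(c\,\bone) = c\,\bone - \alpha c\,P\bone - \varepsilon c\,\Delta\bone = c(1-\alpha)\bone = \bone.
\]
Uniqueness from Theorem~\ref{thm:main1} then gives $u_N = (1-\alpha)^{-1}\bone$.

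As an alternative check valid for $\varepsilon = 0$, one could expand the Neumann series $L_N^{-1} = \sum_{n\geq 0}\alpha^n P^n$, which converges since $\|P\|_{\mathrm{op}}=1$. Applied to $\bone$ it gives $\sum_n \alpha^n \bone$. For $\varepsilon>0$, however, the direct verification above is cleaner, since $P$ and $\Delta$ need not commute.

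There is no real obstacle here. The only point requiring care is that invertibility must hold at $\alpha = 0$ and for all $\varepsilon \geq 0$ so that uniqueness applies. This is exactly the range covered by Theorem~\ref{thm:main1}, whose coercivity bound $(1-\alpha)\|f\|^2$ is uniform in $\varepsilon$.
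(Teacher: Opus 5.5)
Your proposal is correct and follows the paper's own proof. Both verify directly that $(1-\alpha)^{-1}\bone$ solves $L_N u = \bone$ using $P\bone = \bone$ and $\Delta\bone = 0$, then conclude by uniqueness from the invertibility in Theorem~\ref{thm:main1}.
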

	
	\begin{proof}
		We verify directly that $(1-\alpha)^{-1}\bone$ is a solution. Indeed,
		\[
		L_N\bigl((1-\alpha)^{-1}\bone\bigr)
		= (1-\alpha)^{-1}\bigl(\bone - \alpha P\bone - \varepsilon\Delta\bone\bigr)
		= (1-\alpha)^{-1}\bigl(\bone - \alpha\bone - 0\bigr)
		= \bone,
		\]
		using $P\bone = \bone$ and $\Delta\bone = 0$. Since $L_N$ is
		invertible by Theorem~\ref{thm:main1}, this is the unique solution.
	\end{proof}
	
	The flatness of $u_N$ reflects the permutation symmetry of $T_N$:
	every site is dynamically indistinguishable under constant forcing.
	All spatial information passes to the diagonal of the Green operator,
	$\tilu(x) = G_N(x,x)$, which we now compute exactly.
	
	\subsection{The diagonal landscape and localization}
	
	\begin{theorem}\label{thm:main2}
		For $\varepsilon = 0$ and $\alpha \in (0,1)$,
		\[
		\tilu(x) = \frac{1}{1-\alpha^{k_x}},
		\]
		where $k_x$ is the minimal period of $x$ under $T_N$.
	\end{theorem}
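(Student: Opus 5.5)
The plan is to expand $G_N = (I-\alpha P)^{-1}$ as a Neumann series and read off the diagonal entries combinatorially. Since $P$ is a permutation matrix with $\|P\|_{\mathrm{op}} = 1$ and $0<\alpha<1$, the series
\[
G_N = \sum_{n=0}^{\infty} \alpha^n P^n
\]
converges absolutely in operator norm. Its sum is the inverse whose existence is guaranteed by Theorem~\ref{thm:main1} with $\varepsilon = 0$. Taking the $(x,x)$ entry term by term gives $\tilu(x) = \sum_{n\geq 0} \alpha^n [P^n]_{x,x}$. The problem therefore reduces to identifying the diagonal entries of the powers of $P$.

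Next I compute $[P^n]_{x,y}$. Since $(Pf)(x) = f(A^{-1}x)$, induction gives $(P^n f)(x) = f(A^{-n}x \bmod N)$. Hence $[P^n]_{x,y} = \mathbf{1}_{[A^{-n}x = y]}$, and in particular
\[
[P^n]_{x,x} = \mathbf{1}_{[A^{-n}x \equiv x]} = \mathbf{1}_{[A^{n}x \equiv x]} = \mathbf{1}_{[T_N^n x = x]},
\]
where the middle equality uses that $T_N$ is a bijection. The one step that needs care is the claim that $T_N^n x = x$ holds exactly when $k_x \mid n$. Here $k_x$ is well defined and finite because $T_N$ is a permutation of a finite set; in fact $k_x \mid \piN$ since $A^{\piN}=I$. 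The set $\{n \geq 0 : T_N^n x = x\}$ is a submonoid of $\Z_{\geq 0}$ closed under subtraction of smaller elements. By the division algorithm, writing $n = qk_x + r$ with $0\le r<k_x$ gives $T_N^r x = x$, which forces $r=0$ by minimality of $k_x$. So this set is exactly $k_x\Z_{\geq 0}$.

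Substituting back, only the multiples $n = jk_x$ contribute, and each contributes $\alpha^{jk_x}$. This gives the geometric series
\[
\tilu(x) = \sum_{j=0}^{\infty} \alpha^{jk_x} = \frac{1}{1-\alpha^{k_x}},
\]
which converges since $0<\alpha^{k_x}<1$.

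I do not expect a genuine obstacle. The points that need stating explicitly are the justification of the Neumann expansion, namely $\|\alpha P\|<1$ together with agreement with the inverse from Theorem~\ref{thm:main1}, and the correct direction of $A^{-1}$ in $P$, which is harmless because fixed-point sets of $T_N^{n}$ and $T_N^{-n}$ coincide. As an alternative check, one could decompose $P$ on each orbit as a cyclic shift of length $k$ and invert $I-\alpha C_k$ directly, using $C_k^k = I$.
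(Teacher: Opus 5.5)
Your proposal is correct and follows essentially the same route as the paper's proof: expand $G_N$ as the Neumann series $\sum_{n\ge 0}\alpha^n P^n$, identify $[P^n]_{x,x}=\mathbf{1}_{[T_N^n x = x]}$ using bijectivity of $T_N$, and sum over multiples of $k_x$. Your division-algorithm justification that $T_N^n x = x$ exactly when $k_x \mid n$ is a step the paper asserts without proof.
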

	
	\begin{proof}
		Since $\|\alpha P\|_{\mathrm{op}} = \alpha < 1$, the geometric series
		converges in operator norm:
		\[
		G_N = (I-\alpha P)^{-1} = \sum_{n=0}^{\infty}\alpha^n P^n.
		\]
		Since $P$ is the pullback of $T_N$, the iterate $P^n$ is the pullback
		of $T_N^n$, so $(P^nf)(x) = f(T_N^{-n}(x))$. The $(x,x)$ entry of
		$P^n$ is therefore
		\[
		[P^n]_{x,x} = \mathbf{1}_{T_N^{-n}(x) = x} = \mathbf{1}_{T_N^n(x) = x},
		\]
		since $T_N$ is a bijection. Summing over $n$ gives
		\[
		\tilu(x)
		= \sum_{n=0}^{\infty}\alpha^n\,\mathbf{1}_{T_N^n(x)=x}.
		\]
		The orbit of $x$ has minimal period $k_x$, so $T_N^n(x) = x$ if and
		only if $k_x \mid n$. The sum therefore runs only over multiples of
		$k_x$:
		\[
		\tilu(x) = \sum_{j=0}^{\infty}\alpha^{jk_x} = \frac{1}{1-\alpha^{k_x}}.
		\]
	\end{proof}
	
	Although the formula follows from a direct computation, its
	significance lies in what it reveals. The diagonal landscape is a
	spectral invariant of the orbit structure: $\tilu(x)$ depends on $x$
	only through $k_x$, and any two points in the same orbit of $T_N$ have
	identical landscape values. The Neumann series gives an intuitive
	interpretation: weight $\alpha^n$ is assigned to the $n$-th iterate,
	and $\tilu(x)$ accumulates weight over every return of the orbit of $x$
	to itself. A short orbit returns frequently and accumulates large
	weight; a long orbit accumulates small weight. The landscape is a
	portrait of recurrence.
	
	Since the function $k \mapsto (1-\alpha^k)^{-1}$ is strictly
	decreasing for $\alpha \in (0,1)$, the landscape is ordered by orbit
	length, and the maximum is attained at the shortest-period point.
	Theorem~\ref{thm:fixedpt} identifies this point uniquely.
	
	\begin{corollary}\label{cor:uniq_max}
		For $\varepsilon = 0$ and $\alpha \in (0,1)$, the diagonal landscape
		satisfies $\tilu(0) = (1-\alpha)^{-1} > \tilu(x)$ for all $x \neq 0$
		and all $N \geq 2$.
	\end{corollary}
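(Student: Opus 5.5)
The plan is to combine the exact formula of Theorem~\ref{thm:main2} with the fixed-point uniqueness of Theorem~\ref{thm:fixedpt}, using strict monotonicity of $k \mapsto (1-\alpha^k)^{-1}$. First, since $A\cdot 0 = 0$, the origin has minimal period $k_0 = 1$, and Theorem~\ref{thm:main2} gives $\tilu(0) = (1-\alpha)^{-1}$ directly.

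Next, fix $x \neq 0$. Because $T_N$ is a bijection of the finite set $\XN$, every point is periodic, so $k_x$ is a well-defined positive integer. If $k_x = 1$ then $x$ would be a fixed point of $T_N$, and Theorem~\ref{thm:fixedpt} (which rests on $\det(A-I) = -1$ being a unit modulo every $N \geq 2$) forces $x = 0$, a contradiction. Hence $k_x \geq 2$.

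Finally, for $\alpha \in (0,1)$ the map $k \mapsto \alpha^k$ is strictly decreasing on positive integers, so $0 < \alpha^{k_x} < \alpha < 1$. Therefore $1-\alpha^{k_x} > 1-\alpha > 0$. Taking reciprocals of these positive quantities reverses the inequality and gives $\tilu(x) = (1-\alpha^{k_x})^{-1} < (1-\alpha)^{-1} = \tilu(0)$. This holds for every $N \geq 2$, since neither Theorem~\ref{thm:fixedpt} nor Theorem~\ref{thm:main2} depends on $N$ beyond $N \geq 2$.

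No step is a real obstacle. The only point needing care is checking $k_x \geq 2$ for $x \neq 0$, which is exactly the content of Theorem~\ref{thm:fixedpt}. The strictness of the inequality uses $\alpha > 0$, which is why the case $\alpha = 0$ is excluded: there every $\tilu(x)$ equals $1$.
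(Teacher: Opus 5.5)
Your proposal is correct and follows essentially the same route as the paper: combine the exact formula of Theorem~\ref{thm:main2} with the uniqueness of the fixed point (Theorem~\ref{thm:fixedpt}) and the strict monotonicity of $k \mapsto (1-\alpha^k)^{-1}$. You spell out the details the paper's two-line proof leaves implicit, namely that $k_x$ is well defined, that $k_x \geq 2$ for $x \neq 0$, and the reciprocal inequality.
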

	
	\begin{proof}
		By Theorem~\ref{thm:main2}, $\tilu(x) = (1-\alpha^{k_x})^{-1}$ is
		strictly decreasing in $k_x$. By Theorem~\ref{thm:fixedpt}, $k_0 = 1$
		is uniquely the minimum period, so $\tilu(0) > \tilu(x)$ for all
		$x \neq 0$.
	\end{proof}
	
	Figure~\ref{fig:heatmaps} makes the structure visible. For $N = 5$
	three distinct brightness levels appear in the heat map, corresponding
	to the three distinct orbit lengths $\{1, 2, 10\}$; for larger primes
	only two levels are visible.
	
	\begin{figure}[htbp]
		\centering
		\includegraphics[width=\linewidth]{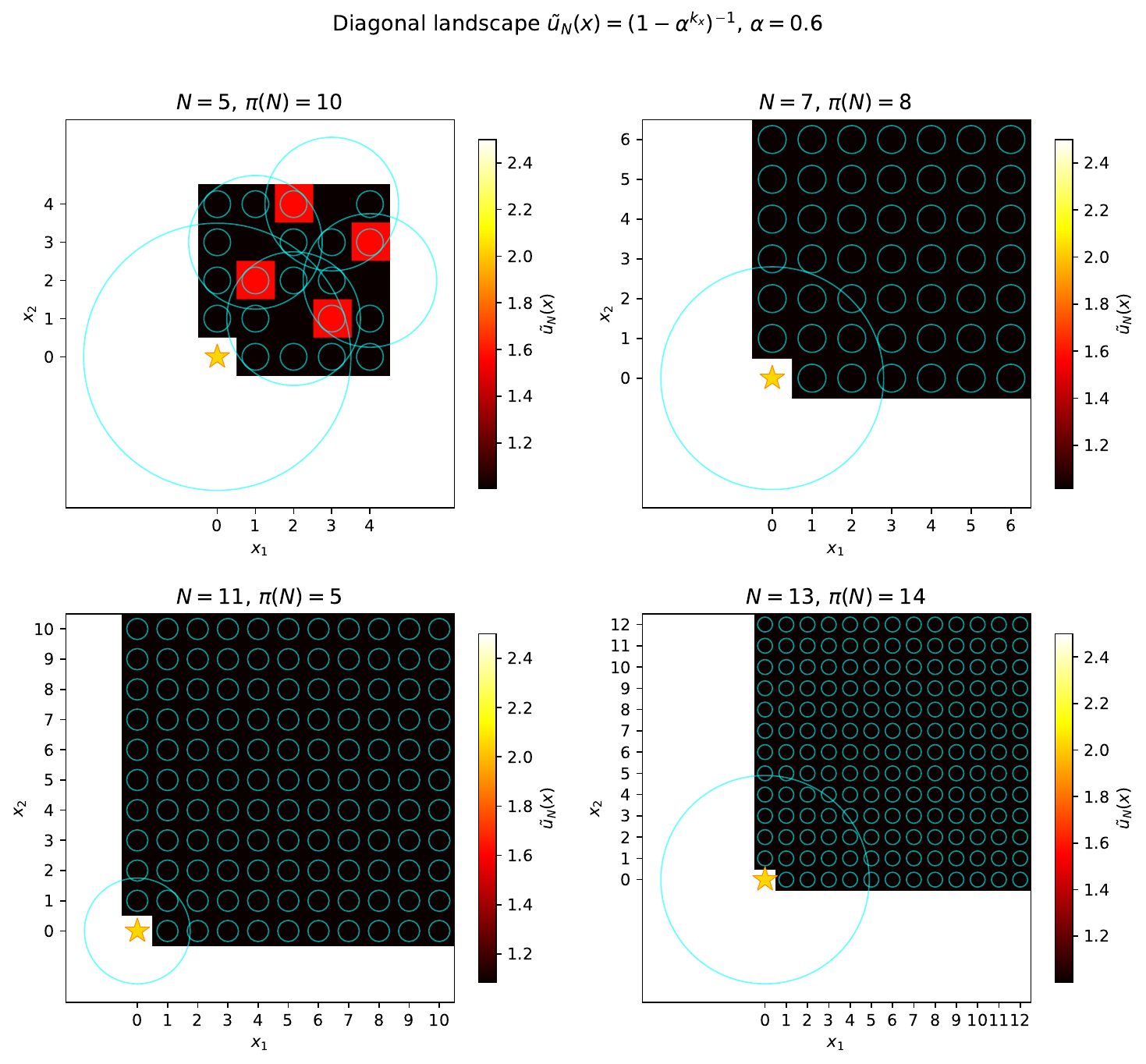}
		\caption{Diagonal landscape $\tilu(x) = (1-\alpha^{k_x})^{-1}$ for
			$N \in \{5,7,11,13\}$ with $\alpha = 0.6$. Colour is proportional to
			$\tilu(x)$. A gold star marks $x = 0$, the unique global maximum.
			For $N = 5$ three brightness levels are visible (periods $1$, $2$,
			$10$); for $N \in \{7,11,13\}$ only two levels appear.}
		\label{fig:heatmaps}
	\end{figure}
	
	The following theorem quantifies the gap between the origin and all
	other sites, and its behaviour as $\alpha$ varies.
	
	\begin{theorem}\label{thm:localization}
		For $\varepsilon = 0$ and $\alpha \in (0,1)$, the diagonal landscape
		$\tilu$ is maximized uniquely at $x = 0$ with value $(1-\alpha)^{-1}$
		and minimized at points of maximal period $\piN$ with value
		$(1-\alpha^{\piN})^{-1}$. As $\alpha \to 1^-$ with $N$ fixed, the
		enhancement ratio satisfies
		\[
		\rho(x) := \frac{\tilu(x)}{\langle\tilu\rangle_{N}}
		\;\longrightarrow\; \frac{\bar{k}}{k_x},
		\]
		where $\langle\tilu\rangle_{N} = N^{-2}\sum_{z \in \XN}\tilu(z)$
		and $\bar{k} = \bigl(N^{-2}\sum_{z \in \XN}k_z^{-1}\bigr)^{-1}$
		is the harmonic mean of the minimal periods.
	\end{theorem}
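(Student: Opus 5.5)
The proof has three parts: the maximum, the minimum, and the limit of the ratio. The first two come directly from Theorem~\ref{thm:main2}, Theorem~\ref{thm:fixedpt} and Corollary~\ref{cor:uniq_max}; the limit is an elementary asymptotic computation.

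\emph{Maximum.} By Theorem~\ref{thm:main2}, $\tilu(x)=(1-\alpha^{k_x})^{-1}$, and $k\mapsto(1-\alpha^k)^{-1}$ is strictly decreasing on $k\geq 1$. Corollary~\ref{cor:uniq_max} then gives the unique maximum $(1-\alpha)^{-1}$ at $x=0$.

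\emph{Minimum.} Every orbit length divides the order of $T_N$, which is $\piN$ because $P^{\piN}=I$ and $A$ has order exactly $\piN$ on $\XN$. Hence $k_x\le\piN$ for all $x$, and the landscape is bounded below by $(1-\alpha^{\piN})^{-1}$.

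To show this bound is attained, I will produce a point of period exactly $\piN$. The basis vectors are the natural candidates. If $A^k e_1\equiv e_1$ and $A^k e_2\equiv e_2 \pmod N$, then $A^k\equiv I \pmod N$, so $\mathrm{lcm}(k_{e_1},k_{e_2})=\piN$. This alone does not guarantee that one of $e_1,e_2$ has period $\piN$, so I will argue more carefully using the Fibonacci form from Theorem~\ref{thm:fib}. The period of $e_1$ is the least $k$ with $F_{2k-1}\equiv 1$ and $F_{2k}\equiv 0 \pmod N$. Since $F_{2k+1}=F_{2k}+F_{2k-1}$, these two conditions also force $F_{2k+1}\equiv 1$, and together they give $A^k\equiv I$. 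Therefore $k_{e_1}=\piN$, which establishes attainment.

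\emph{Enhancement ratio.} Fix $N$ and let $\alpha\to1^-$. For each fixed $k\ge1$,
\[
(1-\alpha)(1-\alpha^k)^{-1} = \frac{1}{1+\alpha+\cdots+\alpha^{k-1}} \longrightarrow \frac{1}{k}.
\]
Multiplying numerator and denominator of $\rho(x)$ by $1-\alpha$ gives
\[
\rho(x)=\frac{(1-\alpha)\tilu(x)}{N^{-2}\sum_z (1-\alpha)\tilu(z)}.
\]
The sum in the denominator is finite, so we may pass to the limit termwise. The limit is
\[
\frac{1/k_x}{N^{-2}\sum_z k_z^{-1}}=\frac{\bar k}{k_x},
\]
and the denominator limit is positive, so the quotient of limits is valid.

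The main obstacle is the attainment of the minimum, i.e.\ exhibiting a point of period exactly $\piN$. The argument via $e_1$ and the Fibonacci recursion above is the route I would take.
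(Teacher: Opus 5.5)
Your proof is correct. The maximum and the $\alpha\to1^-$ limit follow the paper's argument. Your exact identity $(1-\alpha)(1-\alpha^k)^{-1}=(1+\alpha+\cdots+\alpha^{k-1})^{-1}$ is a cleaner form of the paper's expansion $1-(1-\delta)^k\sim k\delta$, and you rightly check that the limiting denominator $N^{-2}\sum_z k_z^{-1}$ is positive.

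Where you genuinely differ is the attainment of the minimum. The paper cites Proposition~\ref{prop:density} for the existence of points of period $\piN$. You instead exhibit the explicit witness $e_1$: by Theorem~\ref{thm:fib}, $A^ke_1\equiv e_1$ forces $F_{2k+1}\equiv 1$, hence $A^k\equiv I \pmod N$, so $k_{e_1}=\piN$. Conceptually, $e_1$ is a cyclic vector. The vectors $e_1$ and $Ae_1=(1,1)^T$ form a basis of $\XN$ (their determinant is $1$), so any power of $A$ fixing $e_1$ also fixes $Ae_1$, and therefore fixes everything.

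Your route is more general, and it repairs a weakness in the paper's. Proposition~\ref{prop:density} is stated only for prime moduli. Its lower bound $1-\tau(\pi(p))/p$ equals $0$ for $p=2$ ($\pi(2)=3$) and $p=3$ ($\pi(3)=4$). As cited, it therefore does not establish maximal-period points for composite $N$ or for those small primes, although the theorem is meant for every $N\geq 2$. Your witness works uniformly in $N$. What the paper's route offers in exchange is quantitative: for large primes almost every point has period $\piN$. Your single-witness argument does not give that, and this theorem does not need it.
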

	
	\begin{proof}
		The maximum and minimum statements follow from the strict monotonicity
		of $k \mapsto (1-\alpha^k)^{-1}$ (Theorem~\ref{thm:main2}), the
		uniqueness of $k_0 = 1$ (Theorem~\ref{thm:fixedpt}), and the existence
		of maximal-period points (Proposition~\ref{prop:density}).
		
		For the asymptotic, write $\alpha = 1-\delta$ with $\delta \to 0^+$.
		For any $k \geq 1$, the first-order expansion gives
		$1 - \alpha^k = 1-(1-\delta)^k \sim k\delta$ as $\delta \to 0^+$.
		Hence $\tilu(x) \sim (k_x\delta)^{-1}$ for each $x$, and averaging
		over $\XN$ gives
		$\langle\tilu\rangle_N \sim \bigl(\bar{k}\,\delta\bigr)^{-1}$.
		Dividing gives $\rho(x) \to \bar{k}/k_x$.
	\end{proof}
	
	Figure~\ref{fig:enhancement} confirms the limit $\bar{k}/k_x$ for
	each period class.
	
	\begin{figure}[htbp]
		\centering
		\includegraphics[width=\linewidth]{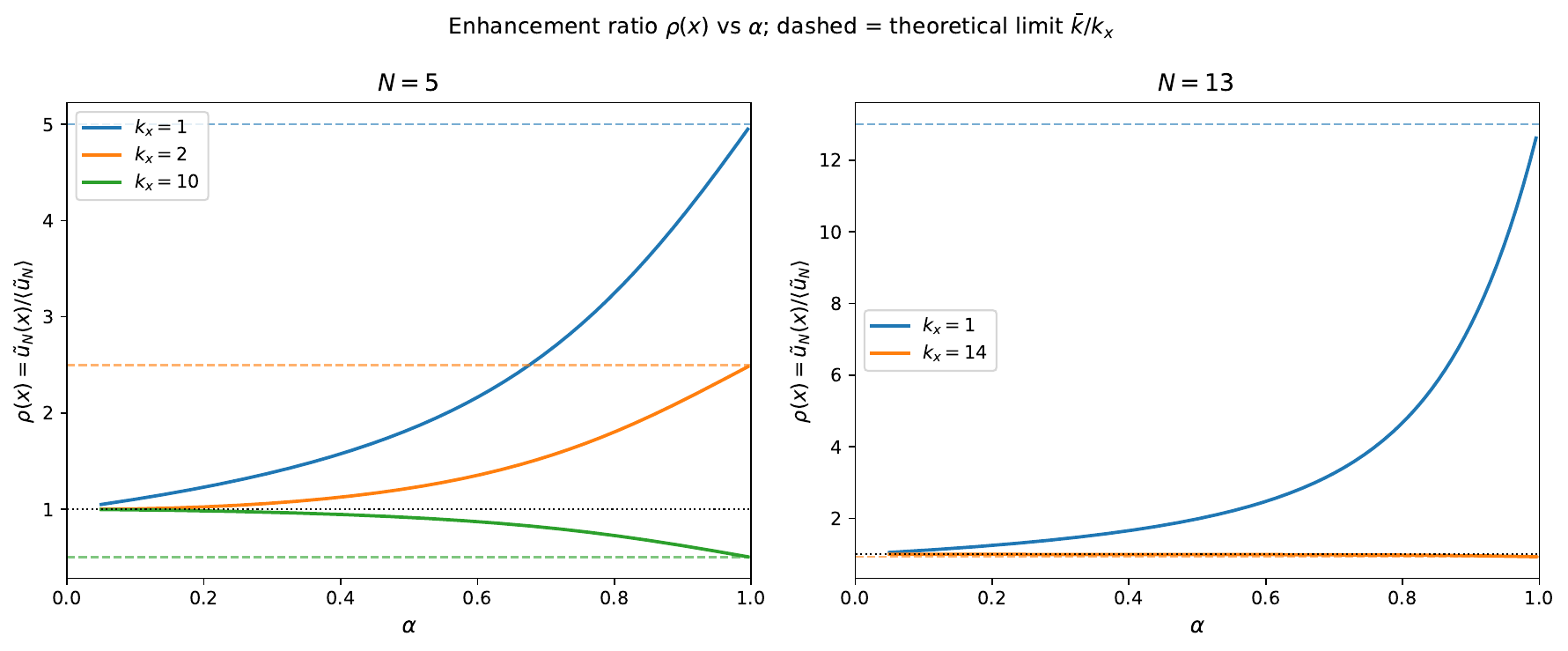}
		\caption{Enhancement ratio $\rho(x) = \tilu(x)/\langle\tilu\rangle_N$
			versus $\alpha \in (0.1,0.99)$ for each period class in
			$N \in \{5,13\}$. Dashed lines show the theoretical limits $\bar{k}/k_x$
			as $\alpha \to 1^-$ (Theorem~\ref{thm:localization}). For $N = 5$ the
			period-$2$ curve lies above $\rho = 1$, confirming that
			intermediate-period points can also carry enhancement.}
		\label{fig:enhancement}
	\end{figure}
	
	\subsection{The trace formula and dynamical zeta function}
	
	The orbit structure of $T_N$ is globally encoded in the determinant
	of $I - \alpha P$. Define the \emph{dynamical zeta function}
	\[
	Z_N(\alpha) = \det(I-\alpha P).
	\]
	On each orbit $\mathcal{O}_j$ of length $d_j$, the restriction
	$P|_{\mathcal{O}_j}$ is a cyclic permutation matrix. Its characteristic
	polynomial is $t^{d_j}-1$, so $\det(I - \alpha P|_{\mathcal{O}_j}) =
	1-\alpha^{d_j}$. Since the orbits are disjoint and cover $\XN$, the
	orbits contribute independently:
	\[
	Z_N(\alpha) = \prod_{j}(1-\alpha^{d_j}).
	\]
	The Green trace and $Z_N$ are connected by the following exact
	identity.
	
	\begin{theorem}[Chandra Green--Zeta Identity]\label{thm:main4}
		For $\varepsilon = 0$ and $|\alpha| < 1$,
		\begin{align}
			\tr(G_N)
			&= N^2 - \alpha\frac{d}{d\alpha}\log Z_N(\alpha),
			\label{eq:GZeta}\\[4pt]
			-\alpha\frac{d}{d\alpha}\log Z_N(\alpha)
			&= \sum_{n=1}^{\infty}\alpha^n\,|\Fix_N(n)|
			= \alpha\,\tr\!\left(P(I-\alpha P)^{-1}\right).
			\label{eq:ZetaFix}
		\end{align}
		In particular, $\tr(G_N) = (1-\alpha)^{-1} + O(1)$ as $\alpha \to 1^-$,
		with leading coefficient $|\Fix_N(1)| = 1$ for all $N$.
	\end{theorem}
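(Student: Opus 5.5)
The plan is to expand everything in powers of $\alpha$ and compare coefficients, reusing the Neumann series from the proof of Theorem~\ref{thm:main2}. For $|\alpha|<1$ we have $\|\alpha P\|_{\mathrm{op}} = |\alpha| < 1$, so $G_N = \sum_{n\ge 0}\alpha^n P^n$ converges in operator norm. The trace is continuous in that norm, so
\[
\tr(G_N) = \sum_{n\ge0}\alpha^n \tr(P^n).
\]
As in Theorem~\ref{thm:main2}, $[P^n]_{x,x} = \mathbf{1}_{T_N^n(x)=x}$, hence $\tr(P^n) = |\Fix_N(n)|$, and $\tr(P^0)=N^2$. This gives
\[
\tr(G_N) = N^2 + \sum_{n\ge1}\alpha^n|\Fix_N(n)|.
\]
In the same way, $P(I-\alpha P)^{-1} = \sum_{n\ge0}\alpha^nP^{n+1}$, so $\alpha\,\tr\bigl(P(I-\alpha P)^{-1}\bigr) = \sum_{n\ge1}\alpha^n|\Fix_N(n)|$. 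This settles the second equality in \eqref{eq:ZetaFix}.

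For the zeta side I will use the orbit factorization $Z_N(\alpha)=\prod_j(1-\alpha^{d_j})$ established just above the statement. For $|\alpha|<1$, real or complex, each factor satisfies $|\alpha^{d_j}|<1$, so $\log Z_N$ is well defined on a neighbourhood of the segment. Differentiating gives
\[
-\alpha\frac{d}{d\alpha}\log Z_N = \sum_j \frac{d_j\alpha^{d_j}}{1-\alpha^{d_j}} = \sum_j\sum_{m\ge1} d_j\alpha^{md_j}.
\]
Collecting the coefficient of $\alpha^n$ gives $\sum_{j:\,d_j\mid n} d_j$. This is exactly the number of points $x$ with $k_x\mid n$, that is $|\Fix_N(n)|$, because each orbit of length $d_j$ dividing $n$ consists of $d_j$ points fixed by $T_N^n$. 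This proves the first equality in \eqref{eq:ZetaFix}, and substituting into the expansion of $\tr(G_N)$ yields \eqref{eq:GZeta}.

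An independent check is the identity $\log\det = \tr\log$, that is $\log Z_N = -\sum_{n\ge 1}\alpha^n\tr(P^n)/n$, which gives the same series directly. I will include it as a remark.

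The main obstacle is the final asymptotic clause, and I expect it to fail as literally stated. Grouping the last formula by orbits gives
\[
\tr(G_N) - N^2 = \sum_j \frac{d_j\alpha^{d_j}}{1-\alpha^{d_j}}.
\]
As $\alpha\to1^-$, each summand is asymptotic to $(1-\alpha)^{-1}$, since $1-\alpha^{d}\sim d(1-\alpha)$. Equivalently, summing $\tilu$ over $\XN$ with Theorem~\ref{thm:main2} gives $\tr(G_N)\sim \#\{\text{orbits}\}\,(1-\alpha)^{-1}$.

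The number of orbits is at least $2$ for every $N\ge2$, because $\XN\setminus\{0\}$ is nonempty. So the coefficient of $(1-\alpha)^{-1}$ is the orbit count, not $|\Fix_N(1)|=1$. The fixed point contributes only the single pole term $\alpha/(1-\alpha)$, and every longer orbit contributes a pole of the same order.

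My plan is therefore to prove the correct statement, $\tr(G_N) = \#\{\text{orbits of }T_N\}\cdot(1-\alpha)^{-1} + O(1)$. I would flag that the leading coefficient $1$ holds only for the contribution of the fixed point $x=0$, where $\tilu(0)=(1-\alpha)^{-1}$ exactly by Corollary~\ref{cor:uniq_max}, rather than for the full trace.
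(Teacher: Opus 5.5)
Your derivation of \eqref{eq:GZeta} and \eqref{eq:ZetaFix} follows the paper's proof. Both use the Neumann series with $\tr(P^n)=|\Fix_N(n)|$, the orbit factorization of $Z_N$ expanded geometrically, and the count $|\Fix_N(n)|=\sum_{d_j\mid n}d_j$. Expanding $P(I-\alpha P)^{-1}$ termwise instead of using the resolvent identity makes no difference. Your $\log\det=\tr\log$ remark is a valid shortcut that avoids the orbit factorization altogether.

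Your objection to the final clause is correct, and it exposes an error in the paper, not in your proposal. The paper's proof writes $\tr(G_N)=(1-\alpha)^{-1}+\sum_{k_x>1}(1-\alpha^{k_x})^{-1}$ and asserts that each summand tends to $k_x^{-1}$. In fact it is $(1-\alpha)(1-\alpha^{k_x})^{-1}$ that tends to $k_x^{-1}$, so every summand diverges. Hence $(1-\alpha)\tr(G_N)\to\sum_x k_x^{-1}=\#\{\text{orbits}\}\geq 2$. For example, Table~\ref{tab:orbits} gives $1+2+2=5$ for $N=5$, not $1$. This also agrees with the paper's own Theorem~\ref{thm:localization}: $\langle\tilu\rangle_N\sim(\bar{k}(1-\alpha))^{-1}$ gives $\tr(G_N)\sim N^2\bar{k}^{-1}(1-\alpha)^{-1}$, and $N^2\bar{k}^{-1}=\sum_z k_z^{-1}$ is the orbit count.

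For your corrected statement, note that ``$\sim$'' alone does not give an $O(1)$ remainder. Write $1-\alpha^d=(1-\alpha)(1+\alpha+\cdots+\alpha^{d-1})$. This shows that $\frac{d\alpha^d}{1-\alpha^d}-\frac{1}{1-\alpha}$ is a rational function regular at $\alpha=1$, with value $-\tfrac{d+1}{2}$ there. Summing over orbits then gives
\[
\tr(G_N)=\frac{\#\{\text{orbits}\}}{1-\alpha}+\frac{N^2-\#\{\text{orbits}\}}{2}+O(1-\alpha),
\]
which is the precise form of the clause you propose to prove.
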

	
	\begin{proof}
		From $G_N = \sum_{n=0}^{\infty}\alpha^n P^n$, taking the trace and
		using $\tr(P^n) = |\Fix_N(n)|$ gives
		$\tr(G_N) = \sum_{n=0}^{\infty}\alpha^n|\Fix_N(n)|$.
		The interchange is justified by
		$\sum_n\alpha^n|\Fix_N(n)| \leq N^2(1-\alpha)^{-1} < \infty$,
		and the $n = 0$ term contributes $N^2$.
		
		Differentiating $\log Z_N(\alpha) = \sum_j\log(1-\alpha^{d_j})$
		and multiplying by $-\alpha$ gives
		\[
		-\alpha\frac{d}{d\alpha}\log Z_N(\alpha)
		= \sum_j\frac{d_j\alpha^{d_j}}{1-\alpha^{d_j}}
		= \sum_j\sum_{m=1}^{\infty}d_j\alpha^{md_j}
		= \sum_{n=1}^{\infty}\alpha^n\,|\Fix_N(n)|,
		\]
		where the identity $|\Fix_N(n)| = \sum_{j:\,d_j\mid n}d_j$ counts,
		for each orbit $\mathcal{O}_j$, the $d_j$ fixed points it contributes
		to $T_N^n$ whenever $d_j \mid n$. Adding the $N^2$ contribution from
		$n = 0$ yields~\eqref{eq:GZeta}. The resolvent identity
		$(I-\alpha P)^{-1} = I + \alpha P(I-\alpha P)^{-1}$ gives
		$\tr(G_N) = N^2 + \alpha\,\tr(P(I-\alpha P)^{-1})$, from which the
		operator form~\eqref{eq:ZetaFix} follows.
		
		For the asymptotic, write $\tr(G_N) = (1-\alpha)^{-1} +
		\sum_{k_x>1}(1-\alpha^{k_x})^{-1}$.
		For fixed $N$, each term in the second sum converges to $k_x^{-1}$ as
		$\alpha \to 1^-$, so it is $O(1)$.
	\end{proof}
	
	In the Gutzwiller--Ruelle theory~\cite{Gutzwiller1971,Ruelle1994},
	each periodic orbit enters the trace formula weighted by a stability
	determinant measuring transverse divergence of nearby trajectories.
	On the finite set $\XN$ every orbit enters with equal weight, and
	identity~\eqref{eq:ZetaFix} is the equal-weight limit of that
	classical correspondence. One sign convention is worth noting: $Z_N$
	is the inverse of the classical Ruelle zeta function, which places the
	orbit product in the denominator.
	
	The following companion result gives the large-$N$ behaviour of the
	normalized trace.
	
	\begin{theorem}\label{thm:large_N}
		For prime $p$ and fixed $\alpha \in (0,1)$,
		\[
		\frac{\tr(G_p)}{p^2}
		= \frac{1}{1-\alpha^{\pi(p)}}
		+ O\!\left(\frac{\tau(\pi(p))}{p}\right)
		\quad \text{as } p \to \infty.
		\]
		In particular, $\tr(G_p)/p^2 \to 1$ as $p \to \infty$.
	\end{theorem}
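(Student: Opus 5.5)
By Theorem~\ref{thm:main2}, $\tr(G_p) = \sum_{x \in X_p} (1-\alpha^{k_x})^{-1}$. We split the sum according to whether $k_x = \pi(p)$ or $k_x < \pi(p)$. Let $M_p = |\{x : k_x = \pi(p)\}|$ and $E_p = p^2 - M_p$. Then
\[
\tr(G_p) = \frac{M_p}{1-\alpha^{\pi(p)}} + \sum_{k_x < \pi(p)} \frac{1}{1-\alpha^{k_x}}.
\]
Dividing by $p^2$ and using $M_p = p^2 - E_p$ gives
\[
\frac{\tr(G_p)}{p^2} - \frac{1}{1-\alpha^{\pi(p)}} = \frac{1}{p^2}\sum_{k_x<\pi(p)}\left(\frac{1}{1-\alpha^{k_x}} - \frac{1}{1-\alpha^{\pi(p)}}\right).
\]

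Each summand is nonnegative. Since $k_x \geq 1$, each summand is also bounded above by $(1-\alpha)^{-1}$, a constant because $\alpha$ is fixed. The error is therefore at most $E_p/(p^2(1-\alpha))$. The proof of Proposition~\ref{prop:density} gives $E_p \leq \tau(\pi(p))\, p$, using Theorem~\ref{thm:prime_count} to bound $|\Fix_p(d)| \leq p$ for each proper divisor $d$ of $\pi(p)$. Hence the error is $O(\tau(\pi(p))/p)$, with implied constant $(1-\alpha)^{-1}$. This proves the main estimate.

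For the limit, two facts are needed. First, $\tau(\pi(p))/p \to 0$, which follows from $\tau(n) = O(n^\delta)$ together with the bound $\pi(p) \leq 3p$ of Dyson--Falk. Second, $\alpha^{\pi(p)} \to 0$, which requires $\pi(p) \to \infty$. I expect this to be the only point needing a separate argument. Since $A^{\pi(p)} \equiv I \pmod p$ and $\pi(p) \geq 1$, Theorem~\ref{thm:fib} gives $p \mid F_{2\pi(p)}$, so $p \leq F_{2\pi(p)}$. This forces $\pi(p) \geq c\log p$ for some constant $c>0$, which tends to infinity. Combining these, $(1-\alpha^{\pi(p)})^{-1} \to 1$ and the error term vanishes, so $\tr(G_p)/p^2 \to 1$.
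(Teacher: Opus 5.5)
Your proof is correct and follows essentially the same route as the paper. You split $\tr(G_p)$ by maximal versus non-maximal period, bound the number of non-maximal-period points by $\tau(\pi(p))\,p$ via Proposition~\ref{prop:density}, and bound each such term by $(1-\alpha)^{-1}$. You also supply two justifications the paper only asserts: that $\pi(p)\to\infty$, via $p \mid F_{2\pi(p)}$, and that $\tau(\pi(p))/p \to 0$, via $\pi(p)\le 3p$.
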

	
	\begin{proof}
		Write $\tr(G_p) = \sum_{x \in X_p}(1-\alpha^{k_x})^{-1}$. Separate
		the fixed point and the remaining points:
		\[
		\frac{\tr(G_p)}{p^2}
		= \frac{(1-\alpha)^{-1}}{p^2}
		+ \frac{1}{p^2}\sum_{x \neq 0}(1-\alpha^{k_x})^{-1}.
		\]
		The first term is $O(p^{-2})$. For the second term, group the
		non-zero points by orbit length. Proposition~\ref{prop:density} gives
		at most $\tau(\pi(p)) \cdot p$ non-maximal-period points; each
		contributes at most $(1-\alpha)^{-1}$. All remaining $p^2 - 1 -
		O(\tau(\pi(p))\cdot p)$ points have period $\pi(p)$ and each
		contributes $(1-\alpha^{\pi(p)})^{-1}$. Therefore
		\[
		\frac{1}{p^2}\sum_{x \neq 0}(1-\alpha^{k_x})^{-1}
		= \frac{1}{1-\alpha^{\pi(p)}}
		+ O\!\left(\frac{\tau(\pi(p))}{p(1-\alpha)}\right),
		\]
		which gives the stated estimate. Since $\pi(p) \to \infty$ with $p$
		and $\alpha \in (0,1)$, one has $\alpha^{\pi(p)} \to 0$, so
		$(1-\alpha^{\pi(p)})^{-1} \to 1$.
	\end{proof}
	
	Figures~\ref{fig:trace} and~\ref{fig:zeta} confirm the trace-zeta
	identity and the leading asymptotics numerically.
	
	\begin{figure}[htbp]
		\centering
		\includegraphics[width=\linewidth]{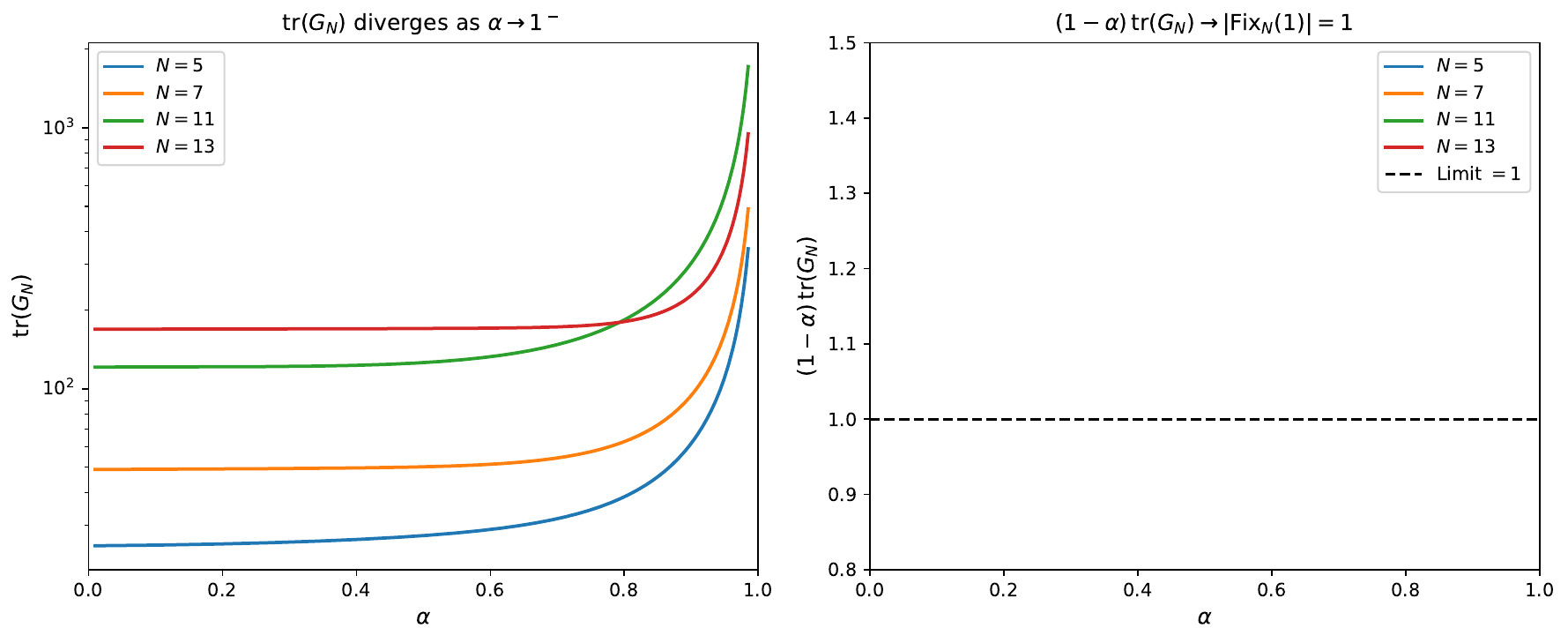}
		\caption{Trace asymptotics for $N \in \{5,7,11,13\}$ with $\alpha = 0.6$.
			Left: $\tr(G_N)$ on a logarithmic scale, diverging as $\alpha \to 1^-$.
			Right: $(1-\alpha)\,\tr(G_N)$ converging to $1$ (dashed) for all four
			values of $N$, confirming the leading coefficient $|\Fix_N(1)| = 1$ of
			Theorem~\ref{thm:main4}.}
		\label{fig:trace}
	\end{figure}
	
	\begin{figure}[htbp]
		\centering
		\includegraphics[width=\linewidth]{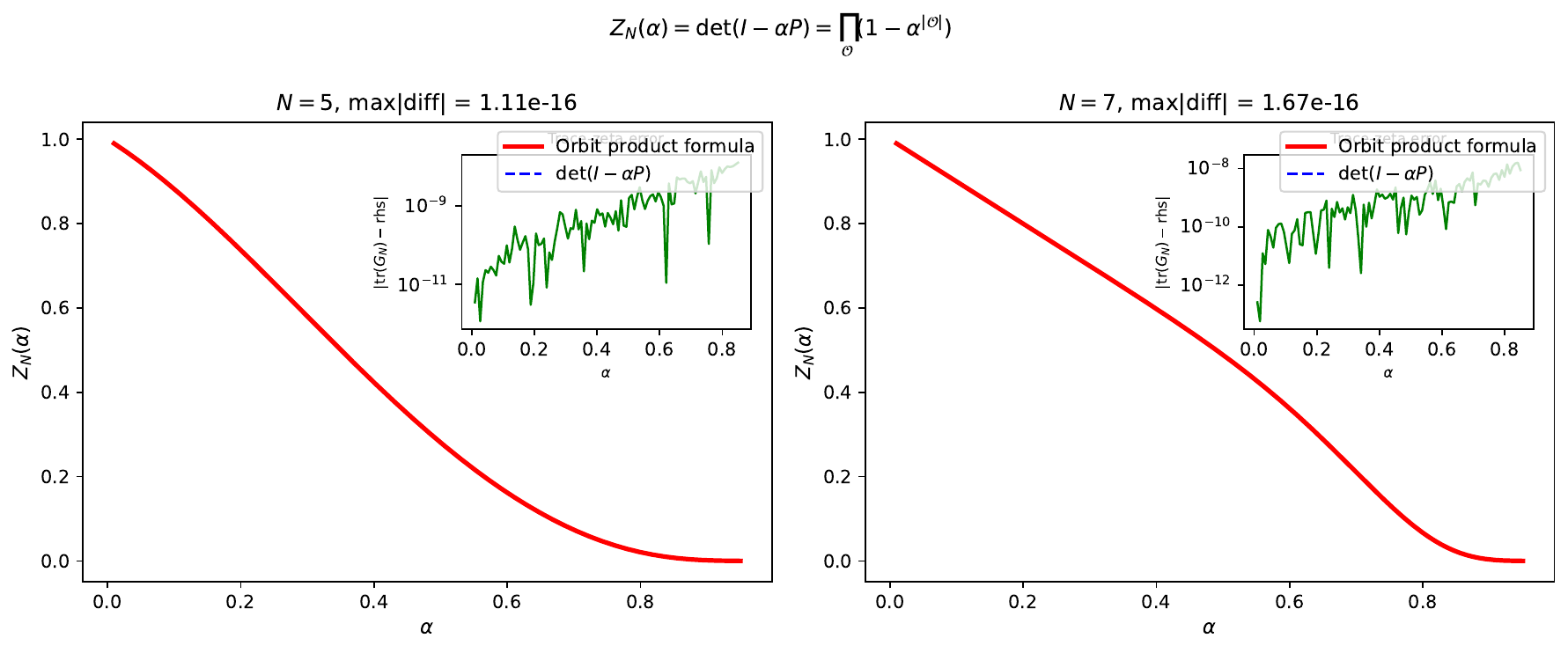}
		\caption{Dynamical zeta function $Z_N(\alpha)$ for $N \in \{5,7\}$.
			The orbit-product formula (red) and $\det(I-\alpha P)$ (blue, dashed)
			are indistinguishable; the maximum pointwise error is below $10^{-14}$.
			The inset shows $\tr(G_N) - (N^2 - \alpha\,\tfrac{d}{d\alpha}\log Z_N)$
			versus $\alpha$, confirming identity~\eqref{eq:GZeta} to machine
			precision.}
		\label{fig:zeta}
	\end{figure}
	
	\subsection{Spectral structure}
	
	The eigenvalues of $L_N$ are completely determined by the orbit
	structure of $T_N$.
	
	\begin{theorem}\label{thm:spectral}
		For $\varepsilon = 0$, the spectrum of $L_N$ is
		\[
		\sigma(L_N) = \{1-\alpha\omega : \omega^{\piN} = 1\}.
		\]
		All eigenvalues lie on the circle $|\lambda-1| = \alpha$, and the
		eigenvalue $1-\alpha e^{2\pi ij/d}$ has multiplicity equal to the
		number of $T_N$-orbits of length $d$, for each $d \mid \piN$.
	\end{theorem}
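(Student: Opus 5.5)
The plan is to block-diagonalize $P$ along the orbit decomposition of $\XN$, exactly as in the computation of $Z_N(\alpha)$ preceding Theorem~\ref{thm:main4}. Since $T_N$ is a bijection, $\XN$ is a disjoint union of orbits $\mathcal{O}_j$ of lengths $d_j$. Each $\ell^2(\mathcal{O}_j)$ is invariant under $P$, and $P|_{\mathcal{O}_j}$ is a cyclic permutation matrix of size $d_j$. Such a matrix is unitarily diagonalized by the discrete Fourier basis on the cycle: $f_\omega(T_N^m x_j) = \omega^{-m}$, or $\omega^{m}$ depending on the direction convention for $P$. Its eigenvalues are therefore exactly the $d_j$-th roots of unity, each with multiplicity one, consistent with the characteristic polynomial $t^{d_j}-1$ already used above. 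Since $L_N = I-\alpha P$ at $\varepsilon = 0$, on each block the eigenvalues of $L_N$ are $1-\alpha\omega$ with $\omega^{d_j}=1$. All of them satisfy $|\lambda-1| = \alpha|\omega| = \alpha$, which gives the circle statement immediately.

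Next I would identify the union of these spectra with the full set $\{1-\alpha\omega : \omega^{\piN}=1\}$. The inclusion $\subseteq$ is immediate: $P^{\piN}=I$, so every $d_j$ divides $\piN$. For $\supseteq$ it suffices to exhibit one orbit of length exactly $\piN$. I would take $x = e_1$ and use Theorem~\ref{thm:fib}. If $A^k e_1 \equiv e_1$, then $F_{2k-1}\equiv 1$ and $F_{2k}\equiv 0 \pmod N$, so $F_{2k+1} = F_{2k}+F_{2k-1}\equiv 1$, whence $A^k \equiv I$. Thus $k_{e_1} = \piN$, and this single orbit already contributes every $\piN$-th root of unity. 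This step is the only place where something beyond the orbit decomposition is needed. Note that it does not follow merely from $\piN$ being the lcm of the orbit lengths.

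Finally, I would count multiplicities by summing over blocks. A primitive $e$-th root $\omega$ (with $e \mid \piN$) is an eigenvalue of the block on $\mathcal{O}_j$ iff $e \mid d_j$. So the multiplicity of $1-\alpha\omega$ is $\#\{j : \operatorname{ord}(\omega) \mid d_j\}$. This is the precise form of the multiplicity claim, and it is where I expect the main care to be needed. Read literally, ``the number of orbits of length $d$'' is correct only for $\omega = e^{2\pi i j/d}$ counted as a new eigenvalue arising from orbits of exact length $d$. In general the eigenvalue $1-\alpha$ (that is, $\omega=1$) has multiplicity equal to the total number of orbits, not just the single fixed point. I would state the count as above and note that it reduces to the statement's count when $e = \piN$ and $\omega$ is primitive. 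As a sanity check, $\sum_{\omega}\text{mult} = \sum_j d_j = N^2$ and $\prod(1-\alpha\omega)^{\text{mult}} = \prod_j (1-\alpha^{d_j}) = Z_N(\alpha)$, in agreement with Theorem~\ref{thm:main4}.
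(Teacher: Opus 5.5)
Your proof is correct and follows the paper's route. You decompose $\ell^2(\XN)$ along orbits, use that each block of $P$ is a cyclic permutation whose simple eigenvalues are the $d_j$-th roots of unity, and count the multiplicity of $1-\alpha\omega$ as the number of orbits whose length is divisible by the order of $\omega$. That is also what the paper's proof establishes, so your remark that the statement's literal count is wrong is consistent with it (e.g.\ $1-\alpha$ has multiplicity equal to the total number of orbits, which is $5$ for $N=5$). Your extra step is worth keeping: by Theorem~\ref{thm:fib}, $e_1$ has minimal period exactly $\piN$, which proves the inclusion $\supseteq$, namely that every $\piN$-th root of unity actually occurs. The paper's proof never addresses this inclusion.
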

	
	\begin{proof}
		Since $P$ is a permutation matrix, $\ell^2(\XN)$ decomposes as an
		orthogonal direct sum over the orbits of $T_N$. On orbit
		$\mathcal{O}_j$ of length $d_j$, the restriction $P|_{\mathcal{O}_j}$
		is a cyclic permutation of $d_j$ elements with eigenvalues
		$e^{2\pi ik/d_j}$ for $k = 0,\ldots,d_j-1$; these are precisely the
		$d_j$ distinct $d_j$-th roots of unity, each occurring with
		multiplicity one. The eigenvalues of
		\[
		L_N|_{\mathcal{O}_j} = I - \alpha P|_{\mathcal{O}_j}
		\]
		are therefore $1-\alpha\zeta$ as $\zeta$ ranges over the $d_j$-th
		roots of unity, each lying on $|\lambda-1| = \alpha$.
		
		Fix a primitive $m$-th root of unity $\omega$, with $m \mid \piN$.
		Since the $d_j$-th roots of unity form the unique subgroup of order
		$d_j$ in the circle group, and $\omega$ generates a subgroup of
		order $m$, the element $\omega$ is a $d_j$-th root of unity if and
		only if $m \mid d_j$. Hence $1-\alpha\omega$ occurs as an eigenvalue
		of $L_N|_{\mathcal{O}_j}$, with multiplicity one, exactly for those
		orbits satisfying $m \mid d_j$, and does not occur for any other
		orbit. Summing over orbits, the multiplicity of $1-\alpha\omega$ in
		$\sigma(L_N)$ equals the number of $T_N$-orbits whose length is a
		multiple of $m$.
	\end{proof}
	
	The nearest eigenvalue to the origin corresponds to the root of unity
	$\omega = 1$, which belongs to the fixed-point orbit. The spectral
	gap is therefore entirely determined by Theorem~\ref{thm:fixedpt}.
	
	\begin{corollary}\label{cor:gap}
		The spectral gap $\min_{\lambda \in \sigma(L_N)}|\lambda|$ equals
		$1-\alpha$, attained uniquely at $\lambda = 1-\alpha$.
	\end{corollary}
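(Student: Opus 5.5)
By Theorem~\ref{thm:spectral}, every eigenvalue of $L_N$ has the form $\lambda = 1-\alpha\omega$ with $\omega$ a $\piN$-th root of unity; in particular $|\omega| = 1$. The plan is to bound $|\lambda|$ from below by the reverse triangle inequality, $|1-\alpha\omega| \geq 1 - \alpha|\omega| = 1-\alpha$, which is positive for $\alpha\in[0,1)$. The value $1-\alpha$ is attained: the constant function $\bone$ satisfies $L_N\bone = (1-\alpha)\bone$ (equivalently, take $\omega=1$, which lies in the spectrum through the fixed-point orbit $\{0\}$ of Theorem~\ref{thm:fixedpt}). This shows $\min_{\lambda\in\sigma(L_N)}|\lambda| = 1-\alpha$.

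For uniqueness of the minimizer $\lambda$ (as a point of the spectrum), I will examine the equality case. Equality $|1-\alpha\omega| = 1-\alpha$ with $\alpha>0$ requires $1$ and $\alpha\omega$ to be positively collinear. Concretely, compute
\[
|1-\alpha\omega|^2 = 1 - 2\alpha\,\mathrm{Re}\,\omega + \alpha^2 \geq 1-2\alpha+\alpha^2,
\]
with equality if and only if $\mathrm{Re}\,\omega = 1$. Since $|\omega|=1$, this forces $\omega = 1$, hence $\lambda = 1-\alpha$. Thus every other eigenvalue is strictly farther from the origin. The degenerate case $\alpha = 0$ has spectrum $\{1\}$, where the claim holds trivially; I will take $\alpha\in(0,1)$ as in the surrounding results.

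The only subtle point is what ``attained uniquely'' means. The eigenvalue $1-\alpha$ is unique as a point of $\sigma(L_N)$, but its multiplicity equals the total number of orbits, by the multiplicity count in the proof of Theorem~\ref{thm:spectral} with $m=1$. So the uniqueness asserted is of the spectral value, not of the eigenvector. I will state this explicitly, and note that the eigenvalue is simple only in the trivial case where the fixed point is the sole orbit, which never happens for $N\geq 2$. This is the step I expect to need care, and I would resolve it by phrasing the corollary's uniqueness as uniqueness of $\lambda$ on the circle $|\lambda-1|=\alpha$.
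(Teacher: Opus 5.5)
Your proof is correct and is essentially the paper's argument: the paper writes $\omega = e^{i\theta}$ and uses $|1-\alpha\omega|^2 = (1-\alpha)^2 + 2\alpha(1-\cos\theta)$, which is your equality-case computation with $\mathrm{Re}\,\omega = \cos\theta$, so your reverse-triangle step is redundant but harmless. Your remark that ``uniquely'' refers to the spectral value is also correct: $1-\alpha$ has multiplicity equal to the total number of orbits, since every orbit, not just $\{0\}$, contributes $\omega = 1$; the paper leaves this implicit.
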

	
	\begin{proof}
		For any $\piN$-th root of unity $\omega = e^{i\theta}$,
		\[
		|1-\alpha\omega|^2
		= 1 - 2\alpha\cos\theta + \alpha^2
		= (1-\alpha)^2 + 2\alpha(1-\cos\theta)
		\geq (1-\alpha)^2,
		\]
		with equality if and only if $\cos\theta = 1$, that is, $\theta = 0$.
	\end{proof}
	
	\begin{figure}[htbp]
		\centering
		\includegraphics[width=\linewidth]{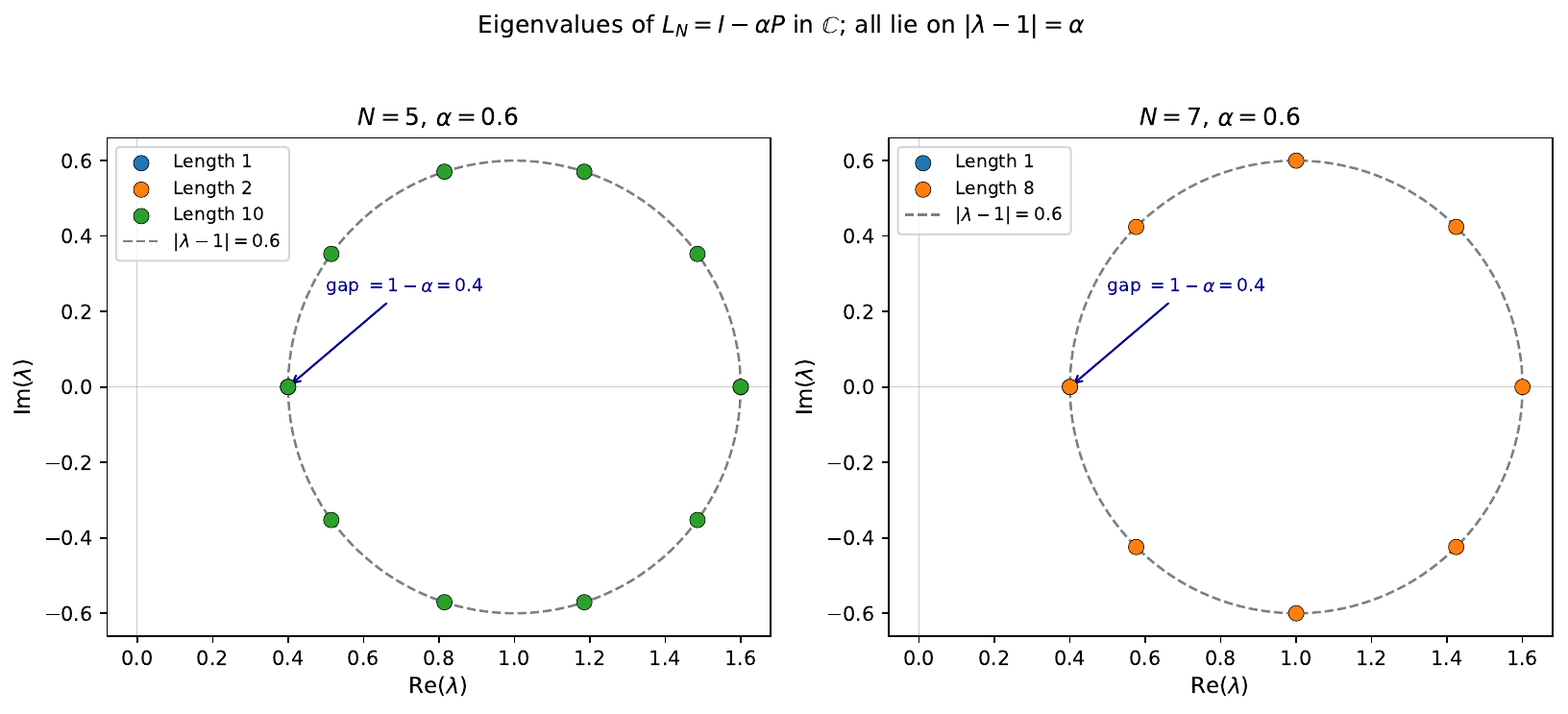}
		\caption{Eigenvalues of $L_N = I-0.6\,P$ in $\C$ for $N \in \{5,7\}$.
			All eigenvalues lie on the circle $|\lambda-1| = 0.6$ (dashed),
			confirming Theorem~\ref{thm:spectral}. The spectral gap
			$\min|\lambda| = 0.4$ is the distance from the origin to the
			nearest eigenvalue, confirming Corollary~\ref{cor:gap}.}
		\label{fig:spectrum}
	\end{figure}
	
	The spectrum of $L_N$ encodes the orbit-length multiset completely.
	
	\begin{theorem}\label{thm:inverse}
		The orbit-length multiset $\{|\mathcal{O}_j|\}$ of $T_N$ is uniquely
		determined by $Z_N(\alpha) = \det(I-\alpha P)$.
	\end{theorem}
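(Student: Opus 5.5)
The plan is to recover the orbit-length multiset from the orbit-product factorization $Z_N(\alpha)=\prod_j(1-\alpha^{d_j})$ established just before Theorem~\ref{thm:main4}. Let $c_d$ denote the number of orbits of length $d$. Then
\[
Z_N(\alpha)=\prod_{d\ge 1}(1-\alpha^{d})^{c_d},
\]
and the multiset $\{|\mathcal{O}_j|\}$ is equivalent to the data $(c_d)_{d\ge1}$. So it suffices to show that the polynomial $Z_N$ determines every $c_d$.

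The cleanest route is through the logarithmic derivative, using \eqref{eq:ZetaFix}. The function $-\alpha\frac{d}{d\alpha}\log Z_N(\alpha)$ is determined by $Z_N$ (on $|\alpha|<1$, where $Z_N\neq 0$). Its power series coefficients are exactly $|\Fix_N(n)|$ for $n\ge1$. In addition, the orbit-counting identity in the proof of Theorem~\ref{thm:main4} gives
\[
|\Fix_N(n)|=\sum_{d\mid n} d\,c_d .
\]
Möbius inversion then yields
\[
n\,c_n=\sum_{d\mid n}\mu(n/d)\,|\Fix_N(d)|,
\]
so each $c_n$ is determined.

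As an alternative check that avoids Möbius inversion, one can induct on $d$. Suppose $c_1,\dots,c_{d-1}$ are known. Divide $Z_N$ by $\prod_{e<d}(1-\alpha^e)^{c_e}$. The quotient is $\prod_{e\ge d}(1-\alpha^e)^{c_e}$, whose expansion is $1-c_d\alpha^d+O(\alpha^{d+1})$, so $c_d$ is read off from the $\alpha^d$ coefficient. Either argument suffices, and I would present the logarithmic-derivative version, since it ties directly to Theorem~\ref{thm:main4}.

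The only point needing care is the input to the inversion: the coefficients of $\log Z_N$ must be intrinsic to $Z_N$, and the identity $|\Fix_N(n)|=\sum_{d\mid n}d\,c_d$ must be exact. Both are routine. $Z_N$ is a polynomial with $Z_N(0)=1$, so its logarithm is a well-defined power series near $0$, and the fixed-point identity was verified in the proof of Theorem~\ref{thm:main4}. I therefore expect no serious obstacle. The step most worth stating explicitly is that $Z_N$ is treated as a function (equivalently, as a polynomial), so its power-series coefficients are uniquely defined. Finally, I would note that $c_d$ can be nonzero only for $d\mid\piN$, which gives a consistency check on the recovered data.
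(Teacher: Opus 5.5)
Your argument is correct, but it recovers the orbit counts through a different invariant than the paper does.

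The paper factors each $1-\alpha^{d_j}$ into cyclotomic polynomials and uses unique factorization in $\mathbb{Q}[\alpha]$. This gives $f(k)$, the multiplicity of $\Phi_k$ in $Z_N$, which is the number of orbits whose length is a \emph{multiple} of $k$. These numbers are exactly the eigenvalue multiplicities obtained in the proof of Theorem~\ref{thm:spectral}. That is what the paper's route buys: it makes the remark that the spectrum of $L_N$ determines the orbit lengths immediate.

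You instead take the power-series coefficients of $-\alpha\frac{d}{d\alpha}\log Z_N$, namely $|\Fix_N(n)|=\sum_{d\mid n}d\,c_d$, and invert over divisors. This needs no irreducibility of cyclotomic polynomials and ties the result directly to Theorem~\ref{thm:main4}. Your inductive variant is more elementary still: divide out the known factors and read off the $\alpha^d$ coefficient.

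There is also a point in favour of your version. In the paper, $f(k)=\sum_{k\mid m}c_m$ is a sum over multiples, so the correct inversion there is $c_k=\sum_{k\mid m\mid \piN}\mu(m/k)\,f(m)$. The divisor-sum formula displayed in the paper's proof is misapplied. For $N=5$, with orbit lengths $1,2,2,10,10$, it would give $c_1=f(1)=5$ instead of $1$. Your relation is genuinely a divisor sum, so the standard M\"obius inversion you write, $n\,c_n=\sum_{d\mid n}\mu(n/d)\,|\Fix_N(d)|$, is the right one.
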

	
	\begin{proof}
		Factor $Z_N$ using $1-\alpha^d = \prod_{k\mid d}\Phi_k(\alpha)$, where
		$\Phi_k$ is the $k$-th cyclotomic polynomial. The multiplicity of
		$\Phi_k$ in $Z_N$ is $f(k) = |\{j : k \mid d_j\}|$. Since the $\Phi_k$
		are distinct irreducible polynomials over $\mathbb{Q}$, these
		multiplicities are uniquely determined by $Z_N$. By M\"{o}bius
		inversion,
		\[
		|\{j : d_j = k\}| = \sum_{m \mid k}\mu(k/m)\,f(m)
		\]
		for each $k$, which uniquely recovers the number of orbits of each
		length.
	\end{proof}
	
	An eigenfunction amplitude bound follows from the same Neumann series
	that gave Theorem~\ref{thm:main2}. It is the finite toral instance of
	the Filoche--Mayboroda estimate~\cite{FilocheMayboroda2012}: the
	landscape $u_N$ controls pointwise amplitude, with the spectral gap
	$1-\alpha$ controlling the constant.
	
	\begin{theorem}\label{thm:bound}
		If $R$ is a right eigenfunction of $L_N$ at $\varepsilon = 0$ with
		real eigenvalue $\lambda > 0$, then
		$|R(x)| \leq \tfrac{\lambda}{1-\alpha}\|R\|_\infty$ for all
		$x \in \XN$.
	\end{theorem}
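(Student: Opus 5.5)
The plan is to mirror the Neumann-series argument of Theorem~\ref{thm:main2}. We write the eigen-equation $L_N R = \lambda R$ in the form $R = \lambda G_N R$, which is legitimate because $G_N = L_N^{-1}$ exists by Theorem~\ref{thm:main1}. The quantity to control is then the action of the Green operator on $R$.

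Fix $x \in \XN$. Evaluating $R = \lambda G_N R$ at $x$ gives
\[
R(x) = \lambda \sum_{y} G_N(x,y) R(y).
\]
By Theorem~\ref{thm:main1} every entry $G_N(x,y)$ is nonnegative. Since $\lambda > 0$, the triangle inequality gives
\[
|R(x)| \le \lambda \|R\|_\infty \sum_y G_N(x,y) = \lambda \|R\|_\infty (G_N\bone)(x).
\]
The row sum $(G_N\bone)(x)$ is exactly the landscape $u_N(x)$. By Proposition~\ref{prop:constant} this equals $(1-\alpha)^{-1}$ at every site. Substituting yields $|R(x)| \le \tfrac{\lambda}{1-\alpha}\|R\|_\infty$.

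Alternatively, the row sum can be read off the Neumann series $G_N = \sum_n \alpha^n P^n$, because each $P^n$ is a permutation matrix with unit row sums. Either route gives the bound.

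The main point needing care is the positivity step. The bound $|\sum_y G_N(x,y)R(y)| \le \|R\|_\infty \sum_y G_N(x,y)$ requires the weights $G_N(x,y)$ to be nonnegative, which is exactly what the M-matrix conclusion of Theorem~\ref{thm:main1} supplies. For a complex-valued $R$ the same estimate holds by applying the triangle inequality to the moduli.

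The hypothesis that $\lambda$ is real and positive is used only to pull $\lambda$ out as $|\lambda| = \lambda$. By Theorem~\ref{thm:spectral}, the only real positive eigenvalues are $1-\alpha$ and possibly $1+\alpha$, so the hypothesis is nonvacuous. For $\lambda = 1-\alpha$ the bound is sharp, since it reduces to the trivial $|R(x)| \le \|R\|_\infty$.
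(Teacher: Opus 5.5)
Your proof is correct and matches the paper's argument essentially step for step: you rewrite $L_N R = \lambda R$ as $R = \lambda G_N R$, combine the entrywise nonnegativity of $G_N$ from Theorem~\ref{thm:main1} with the triangle inequality, and identify the row sum $\sum_y G_N(x,y)$ with $u_N(x) = (1-\alpha)^{-1}$ via Proposition~\ref{prop:constant}. Your side remarks are also correct: the row sums can equally be read off the Neumann series, and the only real eigenvalues are $1-\alpha$ and, when an even-length orbit exists, $1+\alpha$.
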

	
	\begin{proof}
		From $L_NR = \lambda R$ one obtains $R = \lambda G_NR$. Taking
		absolute values and using $G_N(x,y) \geq 0$:
		\[
		|R(x)|
		\leq \lambda\sum_{y \in \XN}G_N(x,y)|R(y)|
		\leq \lambda\|R\|_\infty\sum_{y \in \XN}G_N(x,y)
		= \lambda\|R\|_\infty\,u_N(x)
		= \frac{\lambda}{1-\alpha}\|R\|_\infty,
		\]
		using $u_N(x) = (1-\alpha)^{-1}$ from Proposition~\ref{prop:constant}.
	\end{proof}
	
	\subsection{Localization without disorder}
	
	All threads of the paper converge in the following theorem.
	
	\begin{theorem}\label{thm:nodisorder}
		For $\varepsilon = 0$, $N \geq 2$, and $\alpha \in (0,1)$,
		\[
		\tilu(0) - \tilu(x)
		\;\geq\;
		\frac{1}{1-\alpha} - \frac{1}{1-\alpha^2}
		= \frac{\alpha(1+\alpha)}{(1-\alpha)(1-\alpha^2)}
		\;\longrightarrow\; +\infty
		\quad\text{as } \alpha \to 1^-
		\]
		for all $x \neq 0$.
	\end{theorem}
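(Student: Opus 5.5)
The plan is to combine the exact formula of Theorem~\ref{thm:main2} with the uniqueness of the fixed point (Theorem~\ref{thm:fixedpt}) and the monotonicity already used in Corollary~\ref{cor:uniq_max}. First, Theorem~\ref{thm:main2} gives $\tilu(0) = (1-\alpha^{k_0})^{-1}$. Since $0$ is fixed, $k_0 = 1$, so $\tilu(0) = (1-\alpha)^{-1}$.

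Next, fix $x \neq 0$. By Theorem~\ref{thm:fixedpt}, $x$ is not fixed, so $k_x \geq 2$. For $\alpha \in (0,1)$ the map $k \mapsto \alpha^k$ is strictly decreasing. Hence $k \mapsto (1-\alpha^k)^{-1}$ is strictly decreasing, and $\tilu(x) = (1-\alpha^{k_x})^{-1} \leq (1-\alpha^2)^{-1}$, with equality exactly when $k_x = 2$. Subtracting gives the lower bound $\tilu(0) - \tilu(x) \geq (1-\alpha)^{-1} - (1-\alpha^2)^{-1}$, uniformly in $x \neq 0$ and in $N \geq 2$. No information about $N$ enters beyond Theorem~\ref{thm:fixedpt}.

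The only step needing care is the closed form of the right-hand side. Over the common denominator $1-\alpha^2 = (1-\alpha)(1+\alpha)$ I get
\[
\frac{1}{1-\alpha} - \frac{1}{1-\alpha^2} = \frac{(1+\alpha) - 1}{1-\alpha^2} = \frac{\alpha}{1-\alpha^2} = \frac{\alpha}{(1-\alpha)(1+\alpha)},
\]
and not the displayed $\alpha(1+\alpha)/\bigl((1-\alpha)(1-\alpha^2)\bigr) = \alpha/(1-\alpha)^2$. I would record the corrected value $\alpha/(1-\alpha^2)$. The printed expression is strictly larger, since $(1-\alpha)^2 < 1-\alpha^2$. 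Moreover, equality $k_x = 2$ can occur (e.g.\ $N=5$, where period-$2$ points exist), so the bound with the printed constant would actually be false in general.

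Finally, with the corrected constant, divergence is immediate: as $\alpha \to 1^-$ the numerator tends to $1$ and the denominator $1-\alpha^2 \to 0^+$, so the gap tends to $+\infty$. The blow-up is at rate $\sim \tfrac{1}{2}(1-\alpha)^{-1}$ rather than $(1-\alpha)^{-2}$.
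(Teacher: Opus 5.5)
Your proof is correct and matches the paper's argument: $k_0 = 1$, $k_x \geq 2$ for $x \neq 0$ by Theorem~\ref{thm:fixedpt}, and $k \mapsto (1-\alpha^k)^{-1}$ is strictly decreasing via Theorem~\ref{thm:main2}, so the gap is at least $\alpha/(1-\alpha^2)$, which is also the value the paper's proof computes. Your objection to the displayed closed form is also right: the paper's closing claim that $\alpha(1+\alpha)/\bigl((1-\alpha)(1-\alpha^2)\bigr)$ is an equivalent factored form is an algebra slip, since that expression equals $\alpha/(1-\alpha)^2$, and the period-$2$ points at $N=5$ show the inequality fails with that constant.
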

	
	\begin{proof}
		By Theorem~\ref{thm:fixedpt}, $k_0 = 1$ and $k_x \geq 2$ for all
		$x \neq 0$. By Theorem~\ref{thm:main2}, $\tilu(0) = (1-\alpha)^{-1}$
		and $\tilu(x) \leq (1-\alpha^2)^{-1}$ for all $x \neq 0$. The gap is
		therefore at least
		\[
		\frac{1}{1-\alpha} - \frac{1}{1-\alpha^2}
		= \frac{(1-\alpha^2)-(1-\alpha)}{(1-\alpha)(1-\alpha^2)}
		= \frac{\alpha(1-\alpha)}{(1-\alpha)(1-\alpha^2)}
		= \frac{\alpha}{1-\alpha^2},
		\]
		which is strictly positive for $\alpha \in (0,1)$ and diverges as
		$\alpha \to 1^-$. Writing $1-\alpha^2 = (1-\alpha)(1+\alpha)$ gives
		the equivalent factored form $\alpha(1+\alpha)/((1-\alpha)(1-\alpha^2))$
		displayed in the theorem statement.
	\end{proof}
	
	Both $P$ and $\Delta$ are translation-invariant on $\XN$. No disorder
	is present and no symmetry is broken. The landscape nonetheless
	concentrates at a single site for every $N$, and the concentration
	sharpens without bound as $\alpha \to 1^-$. The source is arithmetic:
	$\det(A-I) = -1$ is a unit modulo every positive integer, forcing a
	unique period-one site. Neither classical Anderson
	localization~\cite{Anderson1958} nor the Filoche--Mayboroda landscape
	theory~\cite{FilocheMayboroda2012,FilocheMayboroda2013} has an analogue
	of this mechanism.
	
	\subsection{First-order perturbation in $\varepsilon$}
	\label{sec:perturbation}
	
	For $\varepsilon > 0$, no closed form for $\tilu(x;\varepsilon)$ is
	available, but the first-order behaviour at $\varepsilon = 0$ is
	accessible by differentiating the resolvent.
	
	\begin{theorem}\label{thm:perturbation}
		For $\alpha \in (0,1)$, the diagonal landscape satisfies
		\[
		\frac{d}{d\varepsilon}\tilu(x;\varepsilon)\Big|_{\varepsilon=0}
		= [G_N\,\Delta\,G_N]_{x,x} \;\leq\; 0
		\]
		for all $x \in \XN$, with strict inequality at $x = 0$ for all
		$\alpha \in (0,1)$. In particular, the localization gap
		$\tilu(0;\varepsilon) - \tilu(x;\varepsilon)$ decreases in $\varepsilon$
		for all small $\varepsilon > 0$.
	\end{theorem}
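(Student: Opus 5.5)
The plan is to differentiate the resolvent identity, turn the diagonal entry of $G_N\Delta G_N$ into a pairing of two explicit orbit-supported vectors, and then handle $x=0$ and $x\neq 0$ separately.

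\emph{Derivative formula.} Write $L_N(\varepsilon) = I-\alpha P-\varepsilon\Delta$ and $G_N(\varepsilon)=L_N(\varepsilon)^{-1}$. Theorem~\ref{thm:main1} shows this inverse exists for every $\varepsilon\ge 0$, and $\varepsilon\mapsto G_N(\varepsilon)$ is smooth on a finite-dimensional space. Differentiating $L_N G_N = I$ gives $L_N'G_N + L_N G_N' = 0$. Since $L_N' = -\Delta$, this yields
\[
G_N'(0) = G_N\,\Delta\,G_N .
\]
Taking the $(x,x)$ entry gives the stated identity $\frac{d}{d\varepsilon}\tilu(x;\varepsilon)\big|_{\varepsilon=0}=[G_N\Delta G_N]_{x,x}$.

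\emph{Reduction to two orbit vectors.} To analyse the sign, I would write $[G_N\Delta G_N]_{x,x} = \langle a_x, \Delta b_x\rangle$ with $a_x = G_N^*e_x$ and $b_x = G_Ne_x$. The Neumann series of Theorem~\ref{thm:main2} makes both explicit. On the orbit of $x$, with $k=k_x$ and $c=(1-\alpha^{k})^{-1}$,
\[
b_x = c\sum_{j=0}^{k-1}\alpha^j\delta_{T_N^jx},\qquad a_x = c\sum_{j=0}^{k-1}\alpha^j\delta_{T_N^{-j}x}.
\]
Expanding $\Delta$ then gives
\[
[G_N\Delta G_N]_{x,x} = -4\sum_y a_x(y)b_x(y) + \sum_{y\sim z}a_x(y)b_x(z).
\]

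\emph{The origin.} For $x=0$ the orbit is $\{0\}$ (Theorem~\ref{thm:fixedpt}), so $a_0=b_0=(1-\alpha)^{-1}\delta_0$. The neighbour sum vanishes because $0\not\sim 0$. Hence
\[
[G_N\Delta G_N]_{0,0} = -\frac{4}{(1-\alpha)^2}<0,
\]
which is the strict inequality claimed at the origin.

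\emph{General $x$ (main obstacle).} For $x\ne0$ both vectors live on the same orbit. The diagonal term is $c^2\bigl(1+(k-1)\alpha^k\bigr)$. The neighbour term counts pairs $T_N^{-i}x\sim T_N^{j}x$ with weight $c^2\alpha^{i+j}$. One needs
\[
\sum_{y\sim z}a_x(y)b_x(z)\le 4\sum_y a_x(y)b_x(y),
\]
and this is the step I expect to be hardest. The reason is that $G_N$ is not self-adjoint, so this is not a quadratic form of $\Delta$ and nonpositivity of $\Delta$ cannot be invoked directly. AM--GM only yields the weaker bound $\sum_{y\sim z}a_x(y)b_x(z)\le 4\|a_x\|^2$, which goes the wrong way because $\sum_y a_x(y)b_x(y)\le\|a_x\|^2$.

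What I would try first is to exploit the structure of the orbit. Each point has four neighbours, so for each $z$ the neighbour sum is at most four times the largest weight of $a_x$ near $z$. The weights $\alpha^{i+j}$ decay geometrically. The dominant diagonal contribution $c^2$ comes from $y=x$ itself, and it should absorb the adjacency terms: there are at most $4$ neighbours of $x$, each carrying weight at most $c^2\alpha$ on each side. If this crude bound fails for some small $N$, I would check it against computations. I would also weaken the claim to $x$ in orbits on which no two of the four nearest neighbours lie on the orbit near $x$. The trace-level statement is safe regardless: $\sum_x[G_N\Delta G_N]_{x,x}=\tr(\Delta G_N^2)$.

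\emph{The gap statement.} Finally, for the monotonicity of the gap one must compare the derivatives at $0$ and at $x$, not merely know their signs. So I would show
\[
-\frac{4}{(1-\alpha)^2} < [G_N\Delta G_N]_{x,x}.
\]
This follows if the neighbour sum above is nonnegative and the diagonal part $4c^2\bigl(1+(k-1)\alpha^k\bigr)$ is smaller than $4(1-\alpha)^{-2}$. The latter I would verify from $c\le(1-\alpha^2)^{-1}$ and the geometric decay of $k\alpha^k$.
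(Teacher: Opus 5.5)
The step you flag as the main obstacle is a genuine gap, and the absorption argument you sketch cannot close it.

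Even at $x$ itself the estimate is too weak. Four neighbours at weight $c^2\alpha$ per side give up to $8c^2\alpha$. For $\alpha>1/2$ and $k$ large this exceeds the diagonal budget $4c^2(1+(k-1)\alpha^k)$. Pairing $\alpha^j+\alpha^{k-j}\le 1+\alpha^k$ repairs this part.

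The real problem is the orbit edges that avoid $x$. Each has the form $\{T_N^{-r}x,T_N^{n}x\}$ with $r,n\ge1$ and $r+n<k$. Each costs $c^2(\alpha^{r+n}+\alpha^{2k-r-n})$ in your neighbour sum, which is strictly more than the $2c^2\alpha^k$ its two endpoints can spare from the diagonal.

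In edge form over the grid,
$\langle a_x,-\Delta b_x\rangle=\sum_{\{y,z\}}(a_x(y)-a_x(z))(b_x(y)-b_x(z))$.
Along $T_N^jx$, $1\le j\le k-1$, the vector $b_x$ decreases while $a_x$ increases. Hence every such edge contributes $-c^2\alpha^{r+n}(1-\alpha^{k-r-n})^2<0$. The only positive terms come from edges at $x$ and from boundary edges of the orbit, and away from $x$ the latter carry weight only $c^2\alpha^k$.

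So nonpositivity is a global statement. You would need to bound how many such chords with small $r+n$ an orbit can carry, that is, count solutions of $(A^{r+n}-I)\,T_N^{-r}x\in\{\pm e_1,\pm e_2\}$ along the orbit. Nothing in the proposal supplies this, and checking computations or weakening the hypothesis does not prove the stated theorem. Nor is the trace-level version automatic. Over an orthonormal eigenbasis of $P$, $\tr(\Delta G_N^2)=\sum_j(1-\alpha\omega_j)^{-2}\langle\Delta\psi_j,\psi_j\rangle$. With $\omega=e^{i\theta}$, $\mathrm{Re}\,(1-\alpha\omega)^{-2}<0$ whenever $\alpha|\sin\theta|>1-\alpha\cos\theta$.

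You will not find the missing step in the paper either. Its proof writes $e_x^{T}G_N\Delta G_Ne_x=\langle\Delta v_x,v_x\rangle$ with $v_x=G_Ne_x$, silently identifying the $x$-th row of $G_N$ with its $x$-th column. Since $G_N^{T}=(I-\alpha P^{-1})^{-1}$, this holds exactly when your $a_x$ and $b_x$ coincide, i.e.\ $k_x\le 2$. For those points your pairing is a genuine quadratic form and nonpositivity is immediate, which you should add. For $k_x\ge3$ neither argument proves the inequality, and nothing in the paper settles whether it holds for every $N$ and $\alpha$.

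The rest of your proposal is sound and in two places sharper than the paper.

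\begin{itemize}
\item At the origin you compute $[G_N\Delta G_N]_{0,0}=-4(1-\alpha)^{-2}$ directly. The paper's claim that $G_N(y,0)$ depends on $k_y$ is wrong, since $G_N(y,0)$ vanishes for $y\ne0$.
\item For the gap you rightly note that signs alone do not suffice, so the paper's ``strictly negative versus nonpositive'' deduction is a non sequitur. Your comparison works independently of the open step, because the neighbour sum is trivially nonnegative.
\end{itemize}

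You still need to prove $(1-\alpha)^2(1+(k-1)\alpha^k)<(1-\alpha^k)^2$. Use $1-\alpha^k=(1-\alpha)\sum_{j<k}\alpha^j$, which gives $\bigl(\sum_{j<k}\alpha^j\bigr)^2\ge 1+2(k-1)\alpha^{k-1}>1+(k-1)\alpha^k$. The route through $c\le(1-\alpha^2)^{-1}$ fails for large $k$ and $\alpha$ near $1$, because it would need $(k-1)\alpha^k<2\alpha+\alpha^2$.
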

	
	\begin{proof}
		Write $G_N(\varepsilon) = (I-\alpha P-\varepsilon\Delta)^{-1}$.
		Differentiating the identity
		$G_N(\varepsilon)(I-\alpha P-\varepsilon\Delta) = I$
		with respect to $\varepsilon$ at $\varepsilon = 0$ and solving for the
		derivative gives
		\[
		\frac{d}{d\varepsilon}G_N(\varepsilon)\Big|_{\varepsilon=0}
		= G_N\,\Delta\,G_N,
		\]
		where $G_N = (I-\alpha P)^{-1}$.
		
		To evaluate the $(x,x)$ diagonal entry, let $v_x = G_N e_x$ denote
		the $x$-th column of $G_N$. Then
		\[
		[G_N\,\Delta\,G_N]_{x,x}
		= e_x^T\,G_N\,\Delta\,G_N\,e_x
		= v_x^T\,\Delta\,v_x
		= \langle \Delta v_x,\,v_x\rangle \;\leq\; 0,
		\]
		since $\Delta$ is negative semidefinite. This gives the stated formula
		and the inequality.
		
		It remains to show strict inequality at $x = 0$. One has
		$\langle \Delta v_0, v_0\rangle = 0$ if and only if $v_0 \in
		\ker\Delta = \mathrm{span}\{\bone\}$, that is, if and only if the
		entries $[v_0]_y = G_N(y,0)$ are all equal. By
		Theorem~\ref{thm:main2}, $G_N(y,0) = (1-\alpha^{k_y})^{-1}$ when
		$y = 0$ and more generally depends on $k_y$. Since $k_0 = 1$ while
		$k_y \geq 2$ for $y \neq 0$ (Theorem~\ref{thm:fixedpt}), the values
		$G_N(y,0)$ are not all equal, so $v_0 \notin \ker\Delta$ and the
		inequality is strict at $x = 0$. Since the derivative of
		$\tilu(0;\varepsilon)$ is strictly negative while that of
		$\tilu(x;\varepsilon)$ is nonpositive, the localization gap
		$\tilu(0;\varepsilon) - \tilu(x;\varepsilon)$ decreases for small
		$\varepsilon > 0$.
	\end{proof}
	
	This confirms quantitatively that the Laplacian perturbation degrades
	the arithmetic localization. Figure~\ref{fig:epsilon} shows the
	degradation over a wider range of $\varepsilon$.
	
	\begin{figure}[htbp]
		\centering
		\includegraphics[width=\linewidth]{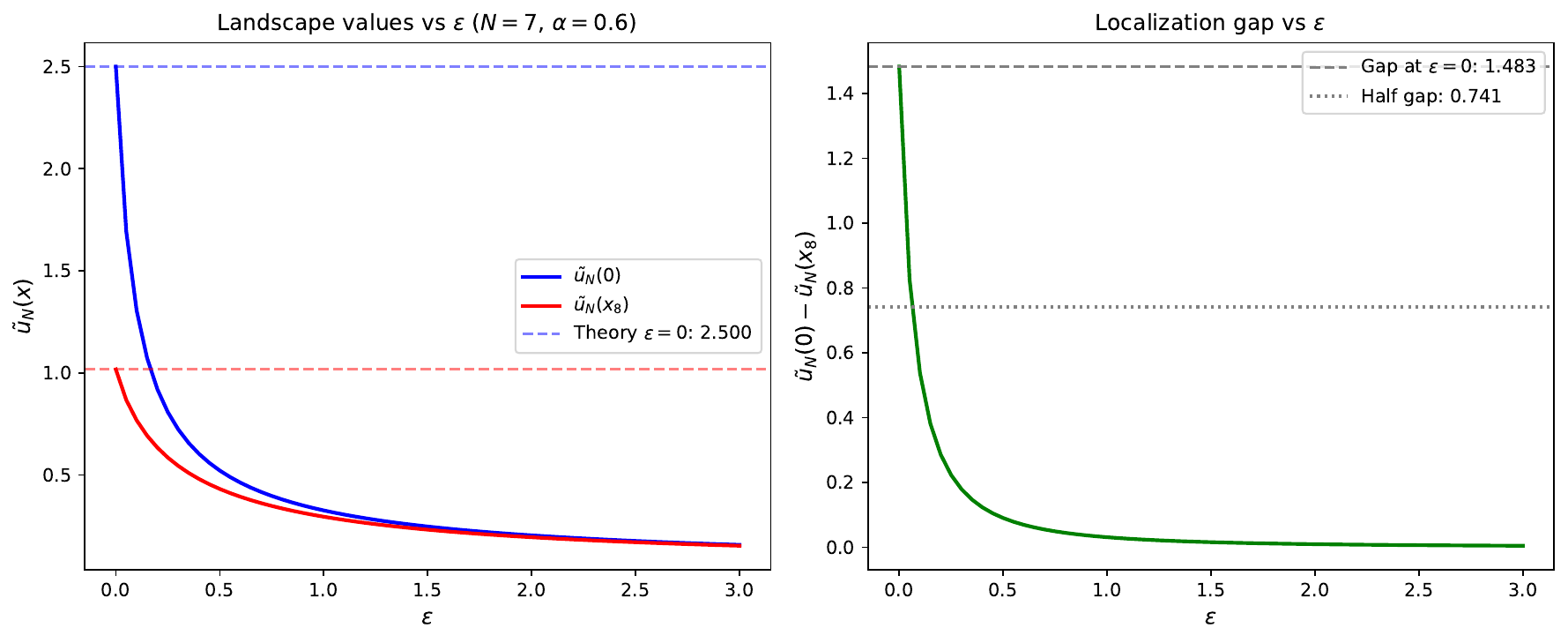}
		\caption{Effect of the Laplacian perturbation on localization, for
			$N = 7$, $\alpha = 0.6$. Left: $\tilu(0)$ and $\tilu(x_1)$ for a
			representative period-$8$ point $x_1$, versus $\varepsilon \in [0,3]$.
			Right: the gap $\tilu(0)-\tilu(x_1)$ versus $\varepsilon$, decreasing
			monotonically. The threshold $\varepsilon^*$ where the gap falls below
			a factor of two is marked.}
		\label{fig:epsilon}
	\end{figure}
	\section{Numerical Verification}
	\label{sec:numerical}
	
	All computations use $\alpha = 0.6$ and $\varepsilon = 0$. Python code
	is available from the authors upon request. The figures illustrating
	the main theorems are placed directly after those theorems in
	Section~\ref{sec:results}.
	
	Table~\ref{tab:orbits} gives the complete orbit decomposition of $T_N$
	for $N \in \{5,7,11,13\}$. Exactly one fixed point appears in every
	case, confirming Theorem~\ref{thm:fixedpt}. The fraction of points
	with maximal period grows rapidly: from $80\%$ at $N = 5$ to over
	$99\%$ at $N = 13$, consistent with Proposition~\ref{prop:density}.
	The identity $\sum(\text{length}\times\text{\#orbits}) = N^2$ can be
	verified directly from each row.
	
	\begin{table}[htbp]
		\centering
		\caption{Orbit decomposition of $T_N$ for $N \in \{5,7,11,13\}$.}
		\label{tab:orbits}
		\smallskip
		\begin{tabular}{crrrrc}
			\toprule
			$N$ & $\piN$ & Length & \#orbits & \#points & Max-period fraction \\
			\midrule
			\multirow{3}{*}{5}  & \multirow{3}{*}{10}
			& 1  & 1  & 1   &           \\
			& & 2  & 2  & 4   &           \\
			& & 10 & 2  & 20  & $80.0\%$  \\
			\midrule
			\multirow{2}{*}{7}  & \multirow{2}{*}{8}
			& 1  & 1  & 1   &           \\
			& & 8  & 6  & 48  & $98.0\%$  \\
			\midrule
			\multirow{2}{*}{11} & \multirow{2}{*}{5}
			& 1  & 1  & 1   &           \\
			& & 5  & 24 & 120 & $99.2\%$  \\
			\midrule
			\multirow{2}{*}{13} & \multirow{2}{*}{14}
			& 1  & 1  & 1   &           \\
			& & 14 & 12 & 168 & $99.4\%$  \\
			\bottomrule
		\end{tabular}
	\end{table}
	
	Table~\ref{tab:pisano} records Pisano periods for a wider range of $N$.
	The multiplicativity of Propositions~\ref{prop:mult}
	and~\ref{prop:primepower} can be verified directly: for instance,
	$\pi(50) = \mathrm{lcm}(\pi(2),\pi(25)) = \mathrm{lcm}(3,50) = 150$,
	and every entry is half the corresponding classical Pisano period.
	
	\begin{table}[htbp]
		\centering
		\caption{Pisano periods $\piN$ and fixed-point density $1/N^2$,
			confirming Propositions~\ref{prop:mult} and~\ref{prop:primepower}.}
		\label{tab:pisano}
		\smallskip
		\begin{tabular}{rrrrrrrrrrrr}
			\toprule
			$N$    & 2 & 3 & 4 & 5  & 7 & 8 & 11 & 13 & 25 & 29 & 50  \\
			\midrule
			$\piN$ & 3 & 4 & 3 & 10 & 8 & 6 & 5  & 14 & 50 & 7  & 150 \\
			$|\Fix|/N^2$
			& $.25$ & $.11$ & $.06$ & $.04$ & $.02$ & $.02$
			& $.008$ & $.006$ & $.002$ & $.001$ & $.0004$ \\
			\bottomrule
		\end{tabular}
	\end{table}
	
	\section{Extensions and Related Settings}
	\label{sec:extensions}
	
	\subsection{Comparison with the one-dimensional setting}
	
	The two papers in this series study the same object from opposite ends
	of the symmetry spectrum. In~\cite{ChandraJain2025} the domain is the
	interval $(0,1)$, translation symmetry is broken by the boundary
	conditions, and the landscape $u(x) = \tfrac{1}{2}x(1-x)$ is itself
	nontrivial: it expands in odd eigenfunctions, its curvature encodes the
	full spectrum, and its reciprocal locates the amplitude maxima of each
	mode. Here the domain is the finite torus, the operator is
	translation-equivariant, and the landscape $u_N$ is flat. The spatial
	content passes entirely to the diagonal Green function $\tilu$.
	
	Despite these differences, the underlying principle is the same. The
	landscape peaks at the dynamically distinguished point: the midpoint
	of $(0,1)$, which is the unique fixed point of the reflection
	$x \mapsto 1-x$, and the origin in $\XN$, which is the unique fixed
	point of $T_N$. The governing period in each case is that of the
	relevant symmetry: two for the reflection, $\piN$ for the cat map.
	Table~\ref{tab:comparison} collects the structural correspondence.
	
	\begin{table}[htbp]
		\centering
		\caption{The one-dimensional Dirichlet
			setting~\cite{ChandraJain2025} and the toral cat map compared.}
		\label{tab:comparison}
		\renewcommand{\arraystretch}{1.3}
		\begin{tabular}{lll}
			\toprule
			Feature & 1D string & Toral cat map \\
			\midrule
			Landscape equation
			& $-u'' = 1$ on $(0,1)$
			& $(I-\alpha P)u_N = \bone$ on $\XN$ \\
			Landscape
			& $u = \tfrac{1}{2}x(1-x)$
			& $u_N = (1-\alpha)^{-1}\bone$ \\
			Spatial information
			& carried by $u$
			& carried by $\tilu = G_N(\cdot,\cdot)$ \\
			Mode selection
			& parity under $x \mapsto 1-x$
			& permutation symmetry of $T_N$ \\
			Landscape peak
			& midpoint $x = \tfrac{1}{2}$
			& origin $x = 0$ \\
			Governing period
			& $\pi = 2$
			& Pisano period $\piN$ \\
			Landscape average
			& $\langle u\rangle = 1/6$
			& $\langle\tilu\rangle = \bar{k}^{-1}$ \\
			\bottomrule
		\end{tabular}
	\end{table}
	
	\subsection{Future directions}
	
	Adding a non-constant potential $V \colon \XN \to \R_{>0}$ to form
	$L_N + V$ breaks translation equivariance and makes the landscape
	non-flat. This allows a direct comparison between arithmetic
	localization driven by the orbit structure of $T_N$ and
	disorder-driven localization driven by $V$, providing a finite toral
	analogue of the disordered landscape theory of~\cite{FilocheMayboroda2012}.
	
	Theorem~\ref{thm:perturbation} gives the first-order correction to
	$\tilu(x;\varepsilon)$. The higher-order terms are accessible by
	iterating the resolvent differentiation, and a complete perturbation
	series in $\varepsilon$ would determine whether the localization gap
	decays algebraically or exponentially.
	
	Theorem~\ref{thm:large_N} shows that $\tr(G_p)/p^2 \to 1$ as
	$p \to \infty$: the landscape becomes asymptotically flat in the
	large-$N$ limit, while maintaining a sharp peak at the origin for
	every finite $N$. Controlling this transition precisely requires
	understanding the distribution of orbit lengths, studied
	in~\cite{KeatingOrbits1991}.
	
	Theorem~\ref{thm:inverse} shows that $\sigma(L_N)$ determines the
	orbit-length multiset of $T_N$. Whether the spectrum determines $T_N$
	up to conjugacy in $SL(2,\Z/N\Z)$ is a discrete analogue of Kac's
	problem~\cite{Kac} for the cat map; to our knowledge it is open.
	
	The framework extends to $(\Z/N\Z)^d$ with $A \in SL(d,\Z)$ whenever
	$\det(A-I) \in (\Z/N\Z)^\times$. The formula
	$\tilu(x) = (1-\alpha^{k_x})^{-1}$ holds in every dimension by the
	same Neumann series argument.
	
	Finally, $L_N$ is a classical precursor to the Hannay--Berry quantized
	cat map~\cite{Hannay}. The spectral clustering of
	Theorem~\ref{thm:spectral} and the trace formula of
	Theorem~\ref{thm:main4} are the classical counterparts of results
	studied quantum-mechanically in~\cite{Keating1991,KurlbergRudnick2000}.
	Whether the arithmetic localization of $\tilu$ leaves a signature in
	the quantum eigenfunction statistics is an appealing open question.
	\section*{Acknowledgements}
	
	The author would like to thank Dr. Sudhir Ranjan Jain and Dr. Nishchal Dwivedi for their guidance. Computations were performed in Python using NumPy, SciPy, and Matplotlib.
	Code is available from the author upon request.
	

\end{document}